\documentclass[11pt]{amsart}
\usepackage{mathrsfs}
\usepackage{amssymb}
\usepackage[utf8]{inputenc} 
\usepackage[english]{babel}

\theoremstyle{plain} 
\newtheorem{thm}{Theorem} 
\newtheorem{lem}[thm]{Lemma} 
 
\newtheorem{prop}[thm]{Proposition}

\theoremstyle{definition}

\theoremstyle{remark} 
\newtheorem*{rem}{Remark}

\DeclareMathOperator{\ess}{ess} 
\DeclareMathOperator{\ea}{ea}
\DeclareMathOperator{\Arg}{arg}
\DeclareMathOperator{\pv}{p.v. }
\DeclareMathOperator{\Tr}{Tr}
\DeclareMathOperator{\supp}{supp}

\title[Essential spectrum of the Neumann--Poincar\'e operator]{The essential spectrum of the Neumann--Poincar\'e operator on a domain with corners}
\date{\today} 

\author{Karl-Mikael Perfekt} \address{Department of Mathematical Sciences, Norwegian University of Science and Technology (NTNU), NO-7491 Trondheim, Norway} \email{karl-mikael.perfekt@math.ntnu.no}

\author{Mihai Putinar} \address{Mathematics Department, University of California, Santa Barbara, Ca 93106, \ {\rm and} School of Mathematics \& Statistics, Newcastle University Newcastle upon Tyne, NE1 7RU, United Kingdom} \email{mputinar@math.ucsb.edu, mihai.putinar@ncl.ac.uk}

\keywords{Neumann--Poincar\'e operator, energy norm, Bergman space, essential spectrum}
%\subjclass[2010]{Primary 47B35. Secondary 30B50.}

\begin{document}
\begin{abstract}
	Exploiting the homogeneous structure of a wedge in the complex plane, we compute the spectrum of the anti-linear Ahlfors--Beurling transform acting on the associated Bergman space. 
	Consequently, the similarity equivalence between the Ahlfors--Beurling transform and the Neumann--Poincar\'e operator provides the spectrum of the latter integral operator on a wedge. A localization
	technique and conformal mapping lead to the first complete description of the essential spectrum of the Neumann--Poincar\'e operator on a planar domain with corners, with respect to the energy norm of the associated harmonic field.
\end{abstract} 

\maketitle
\section{Introduction}

Exactly a hundred years ago Torsten Carleman defended his doctoral dissertation titled "\"Uber das Neumann--Poincar\'esche Problem f\"ur ein Gebiet mit Ecken" \cite{Carleman16}.
The double-layer potential singular integral operator associated with a domain $\Omega \subset \mathbb{R}^2$, known also as the Neumann--Poincar\'e (NP) operator, was at that time a central object of study, first for its role in solving boundary value problems of mathematical physics, but also as the main example in the emerging abstract spectral theories proposed by Hilbert, Fredholm and F. Riesz. While the NP operator is compact on smooth boundaries, the presence of corners produces continua in its essential spectrum. For the modern reader these concepts make no sense without a well defined, complete functional space where the operator NP acts, not to mention also the current definitions of essential spectrum, spectral resolution, approximate or generalized eigenvalues, etc. In a tour de force Carleman did solve the singular integral equation governed by the NP operator and analyzed the (asymptotic) structure of its solutions in a domain with corners. He made use of elementary and very ingenious geometric transformations together with the, at his time new, theory of Fredholm determinants combined with the canonical factorization of entire functions of Hadamard. Carleman's work did not attract the visibility it deserves, nor
did the prior results of his predecessors, among which we mention Zaremba \cite{Zaremba04}.
%We are confident that Carleman's baroque computations hide deep observations, and their time to bloom is
%approaching fast.

Only a few years after Carleman's defense, Radon \cite{Radon} developed the theory of measures of bounded variation, and applied it to study the NP operator on the space of continuous functions $C(\partial \Omega)$. He computed the essential spectral radius for boundaries $\partial \Omega$ of bounded rotation, extending Carleman's work. Note that we now understand that for non-smooth boundaries, the spectrum of the NP operator depends drastically on the underlying space.  For instance, when the NP operator is considered on $L^p(\partial \Omega)$, $p \geq 2$, $\Omega$ a curvilinear polygon, the complete spectral picture is available \cite{Mitrea02} -- and it is entirely different from what appears in the work of Carleman and Radon. 

The Hilbert space on which we will perform a spectral analysis is the energy space of potential fields with sources carried by $ \partial \Omega$. The energy space was advocated by Poincar\'e in his foundational and novel approach to the Dirichlet problem. It stands out as a natural setting for the NP operator for at least two reasons. First, the invertibility properties of the NP operator acting on the energy space lead to finite energy solutions of boundary value problems for the Laplacian, and such solutions often carry a physical interpretation.  Second, due to a symmetrization property, the NP operator has real spectrum on the energy space even when the boundary $\partial \Omega$ is non-smooth (this is not true for example on $L^2(\partial \Omega)$). The recent survey \cite{Wendland09} treats among other things qualitative aspects of the essential spectrum of the NP operator on various spaces of interest, for domains with corners. However, the case of the physically motivated energy space is noted for the lack of information concerning the structure of the essential spectrum.

Ahlfors \cite{Ahlfors52} observed a connection between the spectral radius of the NP operator (the largest Fredholm eigenvalue of $\Omega$) and the quasiconformal reflection coefficient of $\partial \Omega$. The reflection coefficient is notoriously difficult to compute for general domains which do not have any special geometric structure \cite{Krushkal09}. Yet, Ahlfors' inequality provides to date nearly all known spectral bounds of the NP operator in the energy norm. 

Very recently, the NP operator has received a resurgence of interest arising from the mathematical theory of new materials and its need to solve various inverse problems. In particular, spectral analysis questions of the NP operator on non-smooth domains are central in the penetrating works of Ammari, Kang, Milton and their enthusiastic collaborators \cite{ACKLM13, ACKLY13, ADKL14, AK04}. And again, a century after Carleman's work, estimates of the location of the essential spectrum of the NP operator and the asymptotic behavior of its generalized eigenfunctions turn out to be highly sought results. The preprint \cite{KLY15} contains a detailed description of the spectral resolution of the NP operator acting on a lens domain, with respect to the energy space. In a previous work \cite{PP14} we have obtained bounds for the spectrum of the NP operator, in the same energy space, on domains with corners, via distortion theorems of conformal mappings. In the same setting, a detailed numerical study of the spectrum has been done in the preprint \cite{HKL16}.  Some interesting geometric analysis questions pertaining to the NP operator also appear in \cite{MS15}.

The present note contains a new approach to the spectral analysis of the NP operator in a wedge in two variables. We exploit the similarity between the NP operator acting on the energy space (identifiable with a fractional Sobolev space on the boundary) and the Ahlfors--Beurling singular integral operator \cite{BS51} acting on the Bergman space of the underlying domain. The homothetic action of the commutative group of positive real numbers on the wedge turns out to simplify, at least conceptually, the computation. We then generalize, via a standard localization procedure and a conformal mapping technique, the wedge computation to domains
with finitely many corners. The outcome is an exact picture of the essential spectrum of the NP operator, in the energy norm. 

A few comments on the nature of the singular integral transformations we deal with are in order: the NP operator is not symmetric, but only symmetrizable in a norm which is equivalent to a fractional
Sobolev space norm on the boundary, see \cite{PP14} for details. Therefore the NP operator is only scalar in the sense of Dunford, and any spectral resolution has to be understood in this generalized sense \cite{DS71}. The spectral analysis of the anti-linear Ahlfors--Beurling operator can today be naturally understood within the abstract theory of complex symmetric operators \cite{GPP14}. While our localization and conformal mapping argument shows that the essential spectrum is a continuum, it does not control the possible singular continuous part of the spectrum. Recall that the absolute continuity of the spectrum of an operator can be altered by a Hilbert-Schmidt perturbation (according to the classical Weyl-von Neumann theorem). Conversely, for a self-adjoint operator it is preserved by a trace class perturbation (according to the Kato-Rosenblum theorem). Let us therefore clarify that our localizations, while Dunford scalar, can not be jointly put on a normal form. On the other hand, it is known \cite{Werner97} that as a rectangle is elongated, the spectral radius of the associated Ahlfors--Beurling operator changes. When combined with the results on the essential spectrum of the present article, it follows that isolated eigenvalues of the NP operator depend on the non-local geometry of the domain. The eigenvalues are thus very unlikely to be given a simple description.

\section{Preliminaries}
Let $\Omega \subset \mathbb{C}$ be a bounded Lipschitz domain with connected boundary. The Sobolev space of order $1/2$ along the boundary, $H^{1/2}(\partial \Omega)$, is defined in the usual way, using a bi-Lipschitz atlas to view $\partial \Omega$ as a manifold. A Hilbert space norm on $H^{1/2}(\partial \Omega)$ is given by the Besov norm
\begin{equation} \label{eq:besnorm}
\| f \|_{H^{1/2}(\partial \Omega)}^2 \sim \| f \|_{L^2(\partial \Omega)}^2 + \int_{\partial \Omega \times \partial \Omega} \frac{|f(x)-f(y)|^2}{|x-y|^{2}} \, d \sigma(x) \, d \sigma(y), 
\end{equation} 
where $\sigma$ is the natural Hausdorff measure on $\partial \Omega$. See for instance \cite[Appendix II]{TW09}. $H^{-1/2}(\partial \Omega)$ is then defined by duality with respect to the pairing of $L^2(\partial \Omega)$, and $H_0^{-1/2}(\partial \Omega)$ is its subspace of elements $f \in H^{-1/2}(\partial \Omega)$ such that $\langle f, 1 \rangle_{L^2(\partial \Omega)} = 0$.

The Neumann--Poincare operator $K \colon H^{1/2} \left( \partial \Omega \right) \to H^{1/2} \left( \partial \Omega \right)$ is defined by
\begin{equation} \label{eq:NPopdef}
Kf(x) = \frac{2}{\pi} \pv\int_{\partial \Omega}  \partial_{n_y} \log|x-y| f(y) \, d\sigma(y), \qquad x \in \partial \Omega,
\end{equation}
where $n_y$ is the outward normal derivative of $\partial \Omega$ at $y$. $K$ is always a bounded operator. When evaluating the integral for $x \notin \partial \Omega$, we obtain the harmonic double-layer potential $Df$,
\[
Df(x) = \frac{1}{\pi}\int_{\partial \Omega}  \partial_{n_y} \log|x-y| f(y) \, d\sigma(y), \qquad x \in \mathbb{C} \setminus \partial \Omega = \Omega \cup \overline{\Omega}^c.
\]
Yet another characterization of $H^{1/2}(\partial \Omega)$ is that it consists precisely of the functions $f$ such that $Df \in H^1(\mathbb{C} \setminus \partial \Omega)$, i.e. such that
\[\|Df\|_{H^1(\mathbb{C} \setminus \partial \Omega)} = \int_{\Omega \cup \overline{\Omega}^c} | \nabla Df |^2 \, dx < \infty. \]
In other words, $H^{1/2}(\partial \Omega)$ is the space of charges which yield potentials of finite energy. As an element of $H^1(\Omega)$, $Df$ has an (interior) trace $\Tr Df \in H^{1/2}(\partial \Omega)$. $Df$ and $Kf$ are related by the jump formula \begin{equation} \label{eq:jumpformula}
\Tr Df = \frac{1}{2}(f + Kf).
\end{equation}
In the case that $\partial \Omega$ is a $C^2$-curve, \cite[Ch. 8]{Kress} offers a very readable and self-contained introduction to the Neumann--Poincar\'e operator and its use in constructing finite energy solutions to the Dirichlet and Neumann problems for the Laplacian. 

 By $K^* : H^{-1/2}(\partial \Omega) \to H^{-1/2}(\partial \Omega)$ we mean the adjoint of $K$ with respect to the  $L^2 \left( \partial \Omega \right)$-pairing. In \cite{PP14} the authors showed, for a general Lipschitz domain, that $K^* : H_0^{-1/2}(\partial \Omega) \to H_0^{-1/2}(\partial \Omega)$ is similar to a self adjoint operator. The only effect of considering the action of $K^*$ on $H_0^{-1/2}(\partial \Omega)$ rather than on $H^{-1/2}(\partial \Omega)$ is that it loses its isolated eigenvalue $\lambda = 1$ of multiplicity $1$. 

One realization of such a self adjoint operator is the anti-linear Ahlfors--Beurling operator $T_\Omega$ acting on the Bergman space $L^2_a(\Omega)$,
\begin{equation} \label{eq:beurlingtransform}
T_\Omega f (z) = \frac{1}{\pi} \pv \int_\Omega \frac{\overline{f(\zeta)}}{(\zeta-z)^2} \, dA(\zeta), \qquad f\in L^2_a(\Omega), \, z \in \Omega.
\end{equation}
Here the Bergman space $L^2_a(\Omega)$ consists of the holomorphic square-integrable functions in $\Omega$.
To be precise, $K^\ast \colon H_0^{-1/2} \left( \partial \Omega \right) \to H_0^{-1/2} \left( \partial \Omega \right)$ and $T_\Omega : L^2_a(\Omega) \to L^2_a(\Omega)$ are similar as $\mathbb{R}$-linear operators. 

From the similarity we have the following equality of spectra: 
\[\sigma(K) = \sigma_\mathbb{R}(T_\Omega) \cup \{1\}.\]
 Note that if $\lambda$ is in the spectrum of $T_\Omega$, then by anti-linearity so is $e^{i\theta} \lambda$ for any $\theta \in \mathbb{R}$. However, we are interested only in the real spectrum, $K : H^{1/2}(\partial \Omega)/\mathbb{C} \to H^{1/2}(\partial \Omega)/\mathbb{C}$ being similar to a self-adjoint operator over the complex field. Therefore, to determine the spectrum of $K$ using $T_\Omega$ we consider only $\lambda \geq 0$ in the spectrum of $T_\Omega$, and note that the spectrum of $K$ consists of $\pm \lambda$, for all such points $\lambda$, in addition to the simple eigenvalue $1$. 

The Neumann--Poincar\'e operator \eqref{eq:NPopdef} may be written more explicitly as
\[
Kf(x) = \frac{2}{\pi} \pv\int_{\partial \Omega}  \frac{\langle y - x, n_y\rangle}{|y-x|^2} f(y) \, d\sigma(y), \qquad x \in \partial \Omega,
\]

In this article we shall consider the case where $\Omega \subset \mathbb{C}$ is a $C^{2}$-smooth curvilinear polygon. By this we mean that $\Omega$ is a bounded and simply connected domain whose boundary is curvilinear polygonal: there are a finite number of counter-clockwise consecutive vertices $(a_j)_{j=1}^N \subset \mathbb{C}$, $1 \leq N < \infty$, connected by $C^{2}$-smooth arcs $\gamma_j : [0,1] \to \mathbb{C}$ with starting point $a_j$ and end point $a_{j+1}$ (indices modulo $N$), such that $\partial \Omega = \cup_{j} \gamma_j$ and $\gamma_j$ and $\gamma_{j+1}$ meet at the interior angle  $\alpha_{j+1}$ at $a_{j+1}$, $0 < \alpha_{j+1} < 2\pi$. Note that if $\gamma \subset \partial \Omega$ is a $C^2$ subarc, then the kernel 
\begin{equation}
k(x,y) = \frac{\langle y - x, n_y\rangle}{|y-x|^2}
\end{equation}
is actually bounded and continuous for $y \in \gamma'$, $x \in \partial \Omega$, where $\gamma'$ is any strict subarc of $\gamma$. Similarly, $k(x,y)$ is bounded and continuous for $x \in \gamma_1$ and $y \in \gamma_2$ if $\gamma_1$ and $\gamma_2$ are compact and disjoint subsets of $\partial \Omega$ (regardless of smoothness). From these observations we obtain the compactness of certain cut-offs which we shall later use to localize the operator $K$. We roughly follow the proof from \cite{Kress} that $K$ is compact if $\partial \Omega$ is $C^2$, in addition to a trivial observation about multiplication operators.
\begin{lem} \label{lem:multbound}
Let $\rho$ be a smooth function on $\partial \Omega$. Then $M_\rho$, the operator of multiplication by $\rho$, is a bounded operator on $H^{1/2}(\partial \Omega)$.
\end{lem}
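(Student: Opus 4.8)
The plan is to prove boundedness of $M_\rho$ on $H^{1/2}(\partial\Omega)$ directly from the Besov-type norm \eqref{eq:besnorm}, estimating the two summands separately. The $L^2$ part is trivial: $\|\rho f\|_{L^2(\partial\Omega)} \leq \|\rho\|_{L^\infty(\partial\Omega)} \|f\|_{L^2(\partial\Omega)}$. The work is in the double integral. First I would write the numerator of the Gagliardo seminorm integrand as
\[
\rho(x)f(x) - \rho(y)f(y) = \rho(x)\bigl(f(x)-f(y)\bigr) + \bigl(\rho(x)-\rho(y)\bigr)f(y),
\]
so that, by the elementary inequality $|a+b|^2 \leq 2|a|^2 + 2|b|^2$, the seminorm $[\rho f]^2_{H^{1/2}}$ is bounded by $2\|\rho\|_\infty^2 [f]^2_{H^{1/2}}$ plus a cross term
\[
2\int_{\partial\Omega\times\partial\Omega} \frac{|\rho(x)-\rho(y)|^2}{|x-y|^2}\, |f(y)|^2 \, d\sigma(x)\, d\sigma(y).
\]

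For the cross term the key point is that $\rho$ is Lipschitz on $\partial\Omega$ (being smooth on a compact set viewed through a bi-Lipschitz atlas), so $|\rho(x)-\rho(y)| \leq C|x-y|$ for all $x,y\in\partial\Omega$; hence the integrand is dominated by $C^2 |f(y)|^2$. Since $\partial\Omega$ is compact (finite Hausdorff measure), integrating first in $x$ gives a factor $\sigma(\partial\Omega) < \infty$, and the cross term is bounded by $C^2\sigma(\partial\Omega)\|f\|_{L^2(\partial\Omega)}^2$. Combining, $\|\rho f\|_{H^{1/2}(\partial\Omega)}^2 \lesssim (\|\rho\|_\infty^2 + C^2\sigma(\partial\Omega))\|f\|_{H^{1/2}(\partial\Omega)}^2$, which is the claim.

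The only subtlety — and the step I would be most careful about — is the passage between the intrinsic smoothness of $\rho$ on the manifold $\partial\Omega$ and the Euclidean Lipschitz bound $|\rho(x)-\rho(y)|\leq C|x-y|$ used against the Euclidean kernel $|x-y|^{-2}$ in \eqref{eq:besnorm}. One reconciles these by noting that the bi-Lipschitz atlas defining $H^{1/2}(\partial\Omega)$ makes the geodesic (arclength) distance comparable to the chordal distance $|x-y|$ on a connected Lipschitz boundary, uniformly; a smooth function is Lipschitz in the geodesic distance, hence in $|x-y|$. Alternatively, and perhaps more cleanly, one can simply take \eqref{eq:besnorm} as the definition of the norm and observe that it suffices for $\rho$ to be Lipschitz with respect to whatever distance appears there; smoothness of $\rho$ certainly implies this. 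Everything else is routine, so I would keep the exposition short.
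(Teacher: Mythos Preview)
Your proof is correct and is exactly the intended argument: the paper's own proof consists of the single sentence ``This is a straightforward consequence of the Besov norm expression \eqref{eq:besnorm} for $H^{1/2}(\partial \Omega)$,'' and you have simply written out that straightforward consequence in full, including the standard splitting $\rho(x)f(x)-\rho(y)f(y)=\rho(x)(f(x)-f(y))+(\rho(x)-\rho(y))f(y)$ and the Lipschitz bound on $\rho$.
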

\begin{proof}
This is a straightforward consequence of the Besov norm expression \eqref{eq:besnorm} for $H^{1/2}(\partial \Omega)$.
\end{proof}
\begin{lem} \label{lem:cutoffcpct}
Let $\Omega$ be a $C^{2}$-smooth curvilinear polygon with vertices $(a_j)_{j=1}^N$. For each $j$, $1 \leq j\leq N$, let $\rho_j$ be a smooth function on $\partial \Omega$ such that $\rho_j(x) = 1$ for all $x$ in a neighborhood of $a_j$. Furthermore, suppose that the supports of $\rho_j$ are pairwise disjoint (at a positive distance apart). Let $\rho_{N+1} = 1 - \sum_{j=1}^N \rho_j$.

If $j \neq k$ or $j = k = N+1$, then $M_{\rho_j} K M_{\rho_k} :    H^{1/2}(\partial \Omega) \to H^{1/2}(\partial \Omega)$ is a compact operator.
\end{lem}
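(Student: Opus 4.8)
The plan is to realize each $M_{\rho_j}KM_{\rho_k}$ as an explicit integral operator and to check, case by case, that the admissible choices of indices force the kernel to avoid every configuration at which $\partial\Omega$ is non‑smooth near the diagonal. Explicitly,
\[
(M_{\rho_j}KM_{\rho_k})f(x) = \frac{2}{\pi}\,\rho_j(x)\,\pv\!\!\int_{\partial\Omega} k(x,y)\,\rho_k(y)f(y)\,d\sigma(y),\qquad k(x,y)=\frac{\langle y-x,n_y\rangle}{|y-x|^2},
\]
so the kernel is $\kappa(x,y)=\tfrac{2}{\pi}\rho_j(x)k(x,y)\rho_k(y)$; writing $n_y$ as a unit complex number one has $k(x,y)=\mathrm{Re}\bigl(n_y/(y-x)\bigr)$, which exhibits $K$ as a component of the Cauchy singular integral along $\partial\Omega$. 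Using the boundedness and continuity facts recorded just before the lemma I would now run through the three admissible configurations: (i) if $j\neq k$ with $j,k\le N$, then $\operatorname{supp}\rho_j$ and $\operatorname{supp}\rho_k$ lie at a positive distance, so $x=y$ never occurs and $\kappa$ is bounded, in fact $C^1$ in $x$ uniformly in $y$; (ii) if one index is $N+1$ and the other is $j\le N$, then $\operatorname{supp}\rho_{N+1}$ avoids a fixed neighbourhood of every vertex, so wherever $\rho_j$ and $\rho_{N+1}$ overlap one is strictly inside a $C^2$ arc away from $a_j$, whence $\kappa$ extends continuously across the diagonal; (iii) if $j=k=N+1$, the diagonal is met only inside the $C^2$ arcs, while non‑adjacent arcs and (after excising vertex neighbourhoods) adjacent arcs are disjoint compacta, so again $\kappa$ is bounded and continuous. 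In every case $\kappa\in L^2(\partial\Omega\times\partial\Omega)$, hence $M_{\rho_j}KM_{\rho_k}$ is Hilbert--Schmidt, in particular compact, on $L^2(\partial\Omega)$.

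Next I would upgrade from $L^2(\partial\Omega)$ to $H^{1/2}(\partial\Omega)$, splitting off the easy case. In case (i) the bound $|\kappa(x,y)-\kappa(x',y)|\lesssim|x-x'|$ (uniform in $y$), obtained by differentiating under the integral sign since $|x-y|$ is bounded below, feeds directly into the Besov expression \eqref{eq:besnorm} and shows that $M_{\rho_j}KM_{\rho_k}\colon L^2(\partial\Omega)\to H^{1/2}(\partial\Omega)$ is bounded; composing with the compact Rellich embedding $H^{1/2}(\partial\Omega)\hookrightarrow L^2(\partial\Omega)$ gives compactness on $H^{1/2}(\partial\Omega)$. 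In cases (ii) and (iii) the kernel is only continuous along the (smooth) part of the diagonal that it meets, so the Besov estimate is unavailable; instead I would prove boundedness of $M_{\rho_j}KM_{\rho_k}$ on $H^1(\partial\Omega)$ and interpolate. Boundedness on $H^1$ follows by differentiating $\kappa$ along $\partial\Omega$ — producing a Cauchy‑type kernel of size $|x-y|^{-1}$ on the relevant $C^2$ (hence Lipschitz) arcs — integrating by parts once in $y$ to move a derivative onto $f$, and invoking the $L^2$‑boundedness of the Cauchy integral on Lipschitz curves for the remaining singular term and for the commutator $\partial_{s_x}k+\partial_{s_y}k$, which is again Calderón--Zygmund; the smooth multipliers $M_{\rho_j},M_{\rho_k}$ are harmless on $H^1$. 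Since $M_{\rho_j}KM_{\rho_k}$ is then compact on $L^2(\partial\Omega)$ and bounded on $H^1(\partial\Omega)$, interpolation of compactness at $H^{1/2}(\partial\Omega)=[L^2(\partial\Omega),H^1(\partial\Omega)]_{1/2}$ concludes. (One could instead simply quote the classical compactness of the double‑layer potential on $C^{1,\alpha}$ boundaries.)

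The main obstacle is precisely the diagonal‑touching cases (ii)--(iii): there compactness on $L^2(\partial\Omega)$ is strictly weaker than compactness on $H^{1/2}(\partial\Omega)$, and the soft Besov‑seminorm argument fails because a merely $C^2$ arc yields only continuity — not Hölder continuity of exponent $>1/2$ — of the Neumann--Poincar\'e kernel across the diagonal. The interpolation step circumvents this, at the cost of importing the $L^2$‑boundedness of the Cauchy integral on Lipschitz curves; the remaining ingredients — the bookkeeping of the cut‑offs, the Hilbert--Schmidt estimate on $L^2$, and the direct argument in case (i) — are routine.
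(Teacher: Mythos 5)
Your strategy is genuinely different from the paper's, and while the high-level idea is defensible, the key analytic step is not carried out correctly as written.

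The paper proves the lemma in one stroke: it shows that $M_{\rho_j}KM_{\rho_k}$ is \emph{bounded from} $H^{1/2}(\partial\Omega)$ \emph{into} $H^1(\partial\Omega)$ — a gain of half a derivative — and then composes with the compact embedding $H^1(\partial\Omega)\hookrightarrow H^{1/2}(\partial\Omega)$. The engine is the classical Maue jump formula
$\frac{d}{d\sigma}\Tr Df = \tfrac12\bigl(\frac{d}{d\sigma}f - K^*\frac{d}{d\sigma}f\bigr)$,
which combined with the trace jump relation yields $\frac{d}{d\sigma}Kf = -K^*\frac{d}{d\sigma}f$; a subsequent integration by parts produces a kernel $\frac{d}{d\sigma(y)}\partial_{n_x}\log|x-y|$ acting on the \emph{difference} $f(y)-f(x)$, and since this kernel is $\lesssim |x-y|^{-1}$ on the admissible supports, the $L^2$-norm of $\frac{d}{d\sigma}(M_{\rho_j}KM_{\rho_k}f)$ is controlled by the Besov seminorm \eqref{eq:besnorm} of $\rho_k f$ via Cauchy--Schwarz. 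No Calder\'on--Zygmund theory and no interpolation are needed.

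You instead aim for compactness on $L^2$ plus boundedness on $H^1$, and then interpolation of compactness at $H^{1/2}$. Two comments. First, for cases (ii)--(iii) your justification of the $H^1$-bound is not right as stated: you invoke a ``commutator $\partial_{s_x}k+\partial_{s_y}k$'' and claim it is again Calder\'on--Zygmund. But with $k(x,y)=\partial_{n_y}\log|x-y|$, the quantity $\partial_{s_x}k+\partial_{s_y}k$ has no special cancellation — it is generically of order $|x-y|^{-2}$. The cancellation that actually exists (and is exactly Maue's identity) is between $\partial_{s_x}$ applied to the $K$-kernel and $\partial_{s_y}$ applied to the \emph{adjoint} $K^*$-kernel $\partial_{n_x}\log|x-y|$, i.e.\ $\partial_{s_x}\partial_{n_y}\log|x-y| = -\partial_{s_y}\partial_{n_x}\log|x-y|$; after integrating by parts one lands on the $K^*$-kernel applied to $\frac{d}{d\sigma}(\rho_k f)$, which is bounded on the relevant $C^2$ arcs. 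So the integration-by-parts move you gesture at is correct in spirit, but the identity you need is Maue's, and once you have it the operator already gains a derivative, i.e.\ maps $H^{1/2}\to H^1$ — at which point the interpolation detour is superfluous. Second, even granting $H^1$-boundedness, ``interpolation of compactness'' is not automatic: it fails in general for the complex method, and for the real method it is Cwikel's theorem, which you would need to cite (it does apply here since $H^{1/2}(\partial\Omega)=(L^2(\partial\Omega),H^1(\partial\Omega))_{1/2,2}$). Your treatment of case (i) and the Hilbert--Schmidt observation on $L^2$ are both fine, but the central estimate in cases (ii)--(iii) needs to be replaced by the Maue-formula argument to be correct.
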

\begin{proof}
Let $\frac{d}{d\sigma}$ denote (tangential) differentiation along $\partial \Omega$, extended in the distributional sense to an operator $\frac{d}{d\sigma} : H^{1/2}(\partial \Omega) \to H^{-1/2}(\partial \Omega)$. We will actually show that $M_{\rho_j} K M_{\rho_k}$ is bounded as a map from $H^{1/2}(\partial \Omega)$ into $H^1(\partial \Omega)$, where $H^1(\partial \Omega)$ is the space of functions $f$ such that $\frac{d}{d\sigma}f \in L^2(\partial \Omega)$. Since the embedding $H^1(\partial \Omega) \hookrightarrow H^{1/2}(\partial \Omega)$ is compact this is sufficient.

 For $f \in H^{1/2}(\partial \Omega)$, let $Df \in H^1(\Omega)$ denote the double-layer potential of $f$ in the interior of $\Omega$. The tangential derivative is associated with the classical jump formula \cite{Maue}
\[\frac{d}{d\sigma} \Tr Df = \frac{1}{2}\left(\frac{d}{d\sigma}f-K^*\frac{d}{d\sigma}f\right). \]
The validity of this formula for all $f \in H^{1/2}(\partial \Omega)$ follows from the classical considerations by approximation and the continuity of all operators involved. Combined with the jump formula \eqref{eq:jumpformula} we conclude that 
\begin{align*}
\frac{d}{d\sigma} Kf(x) &= -K^*\frac{d}{d\sigma}f(x) = -\frac{2}{\pi} \pv\int_{\partial \Omega}  \partial_{n_x} \log|x-y| \frac{d}{d\sigma}f(y) \, d\sigma(y) \\
&= \frac{2}{\pi} \pv\int_{\partial \Omega}  \frac{d}{d\sigma(y)}\partial_{n_x} \log|x-y| (f(y)-f(x)) \, d\sigma(y),
\end{align*}
where the last step equality follows from integration by parts.
Note that if $x, y \in \supp \rho_{N+1}$, then
\[\left|\frac{d}{d\sigma(y)}\partial_{n_x} \log|x-y|\right| \lesssim \frac{1}{|x-y|}.\]
If instead $x \in \supp \rho_j$ and $y \in \supp \rho_k$, $k \neq j$, then $\frac{d}{d\sigma(y)}\partial_{n_x} \log|x-y|$ is bounded. Hence,
\begin{multline} \label{eq:cpctineq}
\left| \frac{d}{d\sigma} M_{\rho_j}KM_{\rho_k}f(x) - \left[\frac{d}{d\sigma}\rho_j(x)\right] K(\rho_k f)(x) \right| \\ \lesssim  |\rho_j(x)| \int_{\partial \Omega} \frac{|\rho_k(x)f(x)- \rho_k(y)f(y)|}{|x-y|} \, d \sigma(y) 
\end{multline}
 It is clear that the term $\left[\frac{d}{d\sigma}\rho_j(x)\right] K(\rho_k f)(x)$ is in $L^2(\partial \Omega)$, by Lemma \ref{lem:multbound} and the boundedness of $K$ on $H^{1/2}(\partial \Omega)$. It remains to show that the right-hand side of \eqref{eq:cpctineq} is in $L^2(\partial \Omega)$. However, this follows from the Cauchy-Schwarz inequality and Lemma \ref{lem:multbound}:
 \begin{multline*}
 \int_{\partial \Omega} \left| \rho_j(x) \int_{\partial \Omega} \frac{|\rho_k(x)f(x)- \rho_k(y)f(y)|}{|x-y|} \, d\sigma(y) \right|^2 \, d\sigma(x) \\ \lesssim \int_{\partial \Omega} \int_{\partial \Omega} \frac{|\rho_k(x)f(x)- \rho_k(y)f(y)|^2}{|x-y|^2} \, d\sigma(y) \, d\sigma(x) \\ \lesssim \|M_{\rho_k} f\|^2_{H^{1/2}(\partial \Omega)} \lesssim \| f\|^2_{H^{1/2}(\partial \Omega)}. \qedhere
 \end{multline*}
\end{proof}

\section{The Wedge} \label{sec:wedge}
Let $W_\alpha = \{z \in \mathbb{C} \, : \, |\Arg z | < \alpha/2 \}$ be a wedge of aperture $\alpha$, $0 < \alpha < 2\pi$. Consider any linear fractional transformation $L$ which maps $W_{\alpha} \cup \{\infty\}$ onto a bounded domain. Since $T_\Omega$ is unitarily equivalent to $T_{L(\Omega)}$ for any domain $\Omega$ \cite{PP14}, it follows that the spectrum of the Neumann--Poincar\'e operator $K$ associated with $L(W_\alpha)$ may be determined by considering $T_{W_\alpha}$.\footnote{We avoid considering $K$ directly on $\partial W_\alpha$ due to the technical difficulties associated with unbounded domains.} This section is devoted to proving the following.

\begin{thm} \label{thm:specwedge}
The spectrum of $K \colon H^{1/2} \left( \partial L(W_\alpha) \right) \to H^{1/2} \left( \partial L(W_\alpha) \right)$ is given by
\[
\sigma(K) = \left\{x\in \mathbb{R} \, : \, |x| \leq \left|1-\frac{\alpha}{\pi}\right| \right\} \cup \{1\}.
\]
$1$ is a a simple eigenvalue. The remainder of the spectrum is essential, of uniform multiplicity 2.
\end{thm}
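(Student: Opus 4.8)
The plan is to reduce everything to a concrete computation of the spectrum of the anti-linear Ahlfors--Beurling operator $T_{W_\alpha}$ on the Bergman space $L^2_a(W_\alpha)$, using the homothety invariance of the wedge. First I would exploit the fact that $W_\alpha$ is invariant under the multiplicative group of positive reals: the dilations $f(z) \mapsto t\, f(tz)$, $t>0$, act unitarily on $L^2_a(W_\alpha)$ and commute with $T_{W_\alpha}$ up to the anti-linearity. The natural move is to pass to the Mellin transform, i.e. to write $z = \rho e^{i\varphi}$ with $-\alpha/2 < \varphi < \alpha/2$ and substitute $\rho = e^t$, turning the scaling group into translations in $t$; equivalently, parametrize holomorphic functions on $W_\alpha$ by their restriction to a ray and diagonalize the dilation group via the one-dimensional characters $\rho \mapsto \rho^{-s}$, $\operatorname{Re} s = 1$ (so that $z^{-s} \in L^2_a$ locally near the relevant line). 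Under this transform $L^2_a(W_\alpha)$ becomes a direct integral over a vertical line in the $s$-plane, and the anti-linear operator $T_{W_\alpha}$ becomes, fiberwise, an anti-linear operator on $\mathbb{C}$ (or a small matrix), whose "eigenvalue" is an explicit elementary function of $s$ and $\alpha$.

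Second, I would carry out that fiber computation: insert $f(\zeta) = \zeta^{-s}$ (suitably interpreted, with a branch cut outside $W_\alpha$) into the singular integral $\frac{1}{\pi}\,\pv\int_{W_\alpha} \overline{f(\zeta)}\,(\zeta - z)^{-2}\,dA(\zeta)$ and evaluate. Because $\overline{\zeta^{-s}} = \overline{\zeta}^{-\overline{s}}$, the anti-linearity flips $s$ to $\overline{s}$, so on each fiber $T_{W_\alpha}$ is genuinely anti-linear and its square is multiplication by $|m_\alpha(s)|^2$ for an explicit meromorphic symbol $m_\alpha$. I expect the integral to separate into a radial part (a Beta-type integral in $\rho$ giving a factor depending on $s$) and an angular part (an integral over $(-\alpha/2,\alpha/2)$ giving the $\alpha$-dependence); after simplification the modulus $|m_\alpha(s)|$, as $s$ ranges over the critical line, should sweep out exactly the interval $[0, |1 - \alpha/\pi|]$, with each interior value attained on a one-parameter set, accounting for the uniform multiplicity $2$ (the two points $\pm\lambda$ of $\sigma(K)$ coming from a single $\lambda \geq 0$ in $\sigma_{\mathbb{R}}(T_{W_\alpha})$, as explained in the Preliminaries). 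Endpoint behavior and the fact that the direct-integral decomposition leaves no atoms then give that this part of the spectrum is purely essential.

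Third, I would transfer back: by the similarity $K^\ast|_{H^{-1/2}_0} \sim T_\Omega$ recalled in the Preliminaries, and the unitary equivalence $T_{W_\alpha} \cong T_{L(W_\alpha)}$ under the linear fractional map $L$, the real spectrum of $T_{W_\alpha}$ gives $\sigma(K)$ on $\partial L(W_\alpha)$ after adjoining the simple eigenvalue $1$ (the eigenvalue lost when restricting $K^\ast$ to $H^{-1/2}_0$); reflecting $\lambda$ to $\pm\lambda$ produces the symmetric interval $\{|x| \le |1-\alpha/\pi|\}$. The main obstacle I anticipate is making the Mellin/direct-integral diagonalization rigorous on the Bergman space of an \emph{unbounded} domain: one must identify the correct line $\operatorname{Re} s = 1$, show the transform is unitary onto the right $L^2$ direct integral of fiber spaces (these fibers may be one- or two-dimensional depending on how the two edges of the wedge contribute), and justify interchanging the principal-value singular integral with the transform. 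Handling the principal value and the boundary branch-cut contributions carefully in the fiber integral — rather than the bookkeeping of constants — is where the real work lies; once the symbol $m_\alpha(s)$ is in hand, extracting its range on the critical line and reading off multiplicity and essential-ness is routine.
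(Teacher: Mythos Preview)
Your overall strategy---exploit dilation invariance of $W_\alpha$, pass to a Mellin/direct-integral picture, read off the spectrum from an explicit symbol---is exactly the paper's, but the execution differs in two places worth noting. First, instead of separating the two-dimensional principal-value integral into radial and angular parts, the paper applies Stokes' theorem with the Schwarz functions $S_\pm(\zeta)=e^{\mp i\alpha}\zeta$ of the two rays to collapse $T_{W_\alpha}f(x)$ for $x>0$ to a one-dimensional convolution integral on $(0,\infty)$; this sidesteps the delicate handling of the principal value in two variables that you flag as the main obstacle. Second, rather than treating the anti-linear operator fiberwise (where, as you would discover, it intertwines the fibers at $x$ and $-x$ and does not act pointwise), the paper composes with the conjugation $Jf(z)=\overline{f(\bar z)}$ to obtain the \emph{linear} self-adjoint operator $T_{W_\alpha}J$, and then identifies it as $F(A)$ for the dilation generator $A$ and the explicit Mellin symbol $F(x)=\sin(i(\pi-\alpha)x)/\sin(i\pi x)$.

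One point in your write-up is off: the uniform multiplicity $2$ does \emph{not} come from the reflection $\lambda\mapsto -\lambda$ in $\sigma(K)$. It comes from the fact that $A$ has simple spectrum on all of $\mathbb{R}$ (the paper proves this by showing every reproducing kernel $k_z$ is cyclic for $A$) together with the evenness of $F$, so that each value in $(0,1-\alpha/\pi)$ is hit by exactly two spectral parameters $\pm x$. In your direct-integral language this is the statement that the fibers are one-dimensional (not two; holomorphy pins a function down by its values on a single ray) and that the anti-linear $T_{W_\alpha}$ pairs the fibers at $x$ and $-x$. Your parenthetical attributing multiplicity to the $\pm\lambda$ symmetry conflates two different phenomena and should be corrected.
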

\begin{rem}
This result, stated somewhat differently, also appears in the preprint \cite{KLY15}. Our proof is rather different
and we include it with full details below.
\end{rem}
It's sufficient to consider $\alpha < \pi$, because the spectrum of $T_{W_\alpha}$ is the same as that of $T_{W_{2\pi - \alpha}}$, since $W_{2\pi-\alpha}$ is a rotation of $\overline{W_\alpha}^c$. We begin with the following simple proposition about the kernels of $L^2(W_\alpha)$.

\begin{prop}[\cite{PS00}]
$L^2(W_\alpha)$ is a reproducing kernel Hilbert space. The reproducing kernel at the point $z \in W_\alpha$ is given by
\begin{equation} \label{eq:repkernel}
k_z(w) =  \frac{1}{\alpha^2} \frac{w^{\pi/\alpha-1}\overline{z}^{\pi/\alpha-1}}{(w^{\pi/\alpha}+\overline{z}^{\pi/\alpha})^2}.
\end{equation}
\end{prop}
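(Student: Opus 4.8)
The plan is to obtain the formula by transporting the classical Bergman kernel of a half-plane through the power map that straightens the wedge, after first recording the soft fact that the Bergman space $L^2_a(W_\alpha)$ is a reproducing kernel Hilbert space. For the latter, fix $z \in W_\alpha$ and $r>0$ with $\overline{D(z,r)} \subset W_\alpha$. Since $|f|^2$ is subharmonic for holomorphic $f$, the sub-mean-value inequality gives $|f(z)|^2 \le (\pi r^2)^{-1}\int_{D(z,r)} |f|^2 \, dA \le (\pi r^2)^{-1}\|f\|_{L^2_a(W_\alpha)}^2$, so evaluation at $z$ is a bounded linear functional; the same local estimate shows that $L^2$-convergence of holomorphic functions forces locally uniform convergence, hence $L^2_a(W_\alpha)$ is a closed subspace of $L^2(W_\alpha)$ and in particular complete. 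The Riesz representation theorem then produces $k_z \in L^2_a(W_\alpha)$ with $f(z) = \langle f, k_z\rangle$ for every $f$.

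For the explicit formula, I would set $\phi(z) = z^{\pi/\alpha}$, using the holomorphic branch of $\log$ on the slit plane $\mathbb{C}\setminus(-\infty,0]$, which contains $\overline{W_\alpha}\setminus\{0\}$ because $0 < \alpha < 2\pi$. Then $\phi$ is a biholomorphism of $W_\alpha$ onto the right half-plane $\mathbb{H} = \{\operatorname{Re}\zeta > 0\}$, with $\phi'(z) = \tfrac{\pi}{\alpha} z^{\pi/\alpha-1}$. The composition operator $Ug = (g\circ\phi)\,\phi'$ is unitary from $L^2_a(\mathbb{H})$ onto $L^2_a(W_\alpha)$, since $|\phi'|^2$ is exactly the Jacobian of $\phi$; chasing the reproducing identity through $U$ yields the standard transformation rule $k_z^{W_\alpha}(w) = \overline{\phi'(z)}\,\phi'(w)\,k_{\phi(z)}^{\mathbb{H}}(\phi(w))$.

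The Bergman kernel of $\mathbb{H}$ has the form $k_a^{\mathbb{H}}(b) = c\,(b+\bar a)^{-2}$, with $c$ determined by the normalization of area measure; one reads this off either from the disk kernel, proportional to $(1-b\bar a)^{-2}$, via the Cayley transform $\zeta \mapsto (\zeta-1)/(\zeta+1)$, or from the Laplace-transform (Paley--Wiener) model of $L^2_a(\mathbb{H})$. Substituting $\phi(z) = z^{\pi/\alpha}$, $\phi(w) = w^{\pi/\alpha}$ and using $\overline{z^{\pi/\alpha}} = \bar z^{\pi/\alpha}$, $\overline{z^{\pi/\alpha-1}} = \bar z^{\pi/\alpha-1}$ for the chosen branch on $W_\alpha$, everything collapses to
\[
k_z(w) = \frac{1}{\alpha^2}\,\frac{w^{\pi/\alpha-1}\,\bar z^{\pi/\alpha-1}}{\bigl(w^{\pi/\alpha} + \bar z^{\pi/\alpha}\bigr)^2}.
\]

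I do not expect a genuine obstacle here, only careful bookkeeping: one must fix the branch of $z^{\pi/\alpha}$ so that $\phi$ is an honest bijection $W_\alpha \to \mathbb{H}$ and the conjugation identities for fractional powers are valid throughout $W_\alpha$, and then track the normalization of $dA$ through the transformation rule so that the constant comes out as stated. If one prefers to bypass the abstract transformation rule, the same computation run in reverse gives a direct verification: for $f \in L^2_a(W_\alpha)$ the substitution $\zeta = w^{\pi/\alpha}$ converts $\int_{W_\alpha} f(w)\,\overline{k_z(w)}\,dA(w)$ into the reproducing integral for $(f\circ\phi^{-1})(\phi^{-1})'$ against $k_{\phi(z)}^{\mathbb{H}}$ over $\mathbb{H}$, whose value is $f(z)$; this simultaneously proves the reproducing property and pins down the constant.
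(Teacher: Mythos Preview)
The paper does not supply its own proof of this proposition; it is simply quoted from \cite{PS00}. Your argument is the standard one and is correct: the RKHS property follows from the sub-mean-value inequality for $|f|^2$, and the explicit kernel is obtained from the transformation rule for Bergman kernels under the biholomorphism $\phi(z)=z^{\pi/\alpha}$ from $W_\alpha$ onto the right half-plane, whose kernel is $c\,(b+\bar a)^{-2}$.

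One bookkeeping remark. With unnormalized Lebesgue area measure the half-plane Bergman kernel is $\tfrac{1}{\pi}(b+\bar a)^{-2}$, and the transformation rule then yields the prefactor $\pi/\alpha^2$ rather than $1/\alpha^2$. This is a harmless normalization issue (the constant plays no role in any of the subsequent arguments of the paper), and you were right to leave $c$ unspecified until the convention for $dA$ is pinned down.
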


Now let $R_{\pm} = \{z \in \mathbb{C} \, : \, |\Arg z| = \pm \alpha/2\}$ be the two rays of the boundary $\partial W_\alpha$. We consider the holomorphic Schwarz functions $S_\pm$ on $W_\alpha$ such that $S_\pm(\zeta) = \bar{\zeta}$ on $R_\pm$,
\[
S_\pm(\zeta) = e^{\mp i \alpha} \zeta.
\]
By first applying Stokes' theorem and then Cauchy-Goursat's theorem to each of the rays $R_+$ and $R_-$, we find for functions $f$ with sufficient decay that for $z \in W_\alpha$
\begin{align*}
\overline{T_{W_\alpha} f(z)} &= \lim_{\varepsilon \to 0} \frac{i}{2\pi} \left[ \int_{\partial W_\alpha} \frac{f(\zeta)}{\bar{\zeta}-\bar{z}} \, d\zeta - \frac{1}{\varepsilon^2} \int_{|\zeta-z|=\varepsilon} f(\zeta)(\zeta-z) \, d\zeta \right] \\ &= \frac{i}{2\pi} \left[ \int_{R_-} \frac{f(\zeta)}{S_-(\zeta)-\bar{z}} \, d\zeta - \int_{R_+} \frac{f(\zeta)}{S_+(\zeta)-\bar{z}} \, d\zeta    \right] \\
&= \frac{i}{2\pi} \int_0^\infty f(y) \left[\frac{1}{e^{i \alpha}y -\bar{z}} - \frac{1}{e^{- i \alpha}y -\bar{z}} \right] \, dy \\
&= \frac{\sin \alpha}{\pi} \int_0^\infty f(y) \frac{y}{(e^{i \alpha}y -\bar{z})(e^{- i \alpha}y -\bar{z})} \, dy.
\end{align*}
For instance, this formula is valid for linear combinations of kernels \eqref{eq:repkernel}. Hence, it follows for $x > 0$ that
\begin{align}  \label{eq:twcomp}
\begin{split}
T_{W_\alpha} f(x) &= \frac{\sin \alpha}{\pi} \int_0^\infty \overline{f(y)} \frac{y}{|e^{i \alpha}y -x|^2} \, dy\\  &= \frac{\sin \alpha}{\pi} \int_{-\infty}^\infty \overline{e^{t}f(e^{t}x)} \frac{e^{t}}{|e^{i\alpha}e^t - 1|^2}\, dt,
\end{split}
\end{align}
motivating the following lemma.
\begin{lem} \label{lem:Acomp}
For $t \in \mathbb{R}$, let $U_t : L^2_a(W_\alpha) \to L^2_a(W_\alpha)$ be the operator
\[U_tf(z) = e^tf(e^tz).\]
Then $\left(U_t\right)_{t\in\mathbb{R}}$ is a strongly continuous group of unitary operators with generator $iA$, $U_t = e^{itA}$, where 
\[Af(z) =  -i(f(z) + zf'(z)).\]
$A$ is a (densely defined) self-adjoint operator with full spectrum, $\sigma(A) = \mathbb{R}$. Furthermore, its spectrum has uniform multiplicity $1$.
\end{lem}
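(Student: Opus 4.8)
The plan is to handle the group‑theoretic assertions by elementary means and to read off the spectral data by diagonalizing the dilation group. The substitution $w=e^{t}z$ gives $\int_{W_\alpha}|e^{t}f(e^{t}z)|^{2}\,dA(z)=\int_{W_\alpha}|f(w)|^{2}\,dA(w)$, so each $U_t$ is an isometry; since $U_sU_t=U_{s+t}$ and $U_0=I$ it is invertible, hence unitary. As $U_t$ is a weighted composition operator, $U_t^{\ast}k_z=e^{t}k_{e^{t}z}$ and therefore $U_tk_z=e^{-t}k_{e^{-t}z}$; because $z\mapsto k_z$ is norm continuous (the kernel \eqref{eq:repkernel} is jointly continuous), each orbit $t\mapsto U_tk_z$ is norm continuous, and since the linear span of $\{k_z:z\in W_\alpha\}$ is dense in $L^2_a(W_\alpha)$ while $\sup_t\|U_t\|=1$, the usual $\varepsilon/3$ argument yields strong continuity. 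Stone's theorem then provides a densely defined self‑adjoint operator $B$ with $U_t=e^{itB}$. To identify $B$, observe that for holomorphic $f$ and $z$ ranging over a compact subset of $W_\alpha$ a Taylor expansion gives $t^{-1}(e^{t}f(e^{t}z)-f(z))\to f(z)+zf'(z)$ uniformly on that set as $t\to0$; hence whenever $f\in\operatorname{dom}(B)$, so that $t^{-1}(U_tf-f)$ converges in $L^2_a(W_\alpha)$, the $L^2$‑limit must coincide with this locally uniform limit, so $iBf=f+zf'$. Thus $B=A$ acts as stated.

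For the spectral statement the idea is to conjugate $(U_t)$ to a multiplication operator. The principal branch of $\phi(z)=z^{\pi/\alpha}$ maps $W_\alpha$ conformally onto the right half‑plane $\Pi=W_\pi$; being homogeneous, the associated unitary $Vf=(f\circ\phi^{-1})\,(\phi^{-1})'$ from $L^2_a(W_\alpha)$ onto $L^2_a(\Pi)$ intertwines $U_t$ with the dilation $g(\zeta)\mapsto e^{s}g(e^{s}\zeta)$ on $L^2_a(\Pi)$ at the rescaled time $s=t\pi/\alpha$. On the half‑plane the Laplace transform $G\mapsto\int_0^{\infty}e^{-\zeta x}G(x)\,dx$ is, by the Paley–Wiener theorem for the Bergman space of $\Pi$, a unitary from a weighted space $L^2(\mathbb{R}_+,dx/x)$ onto $L^2_a(\Pi)$, and it carries the dilation to $G(x)\mapsto G(e^{-s}x)$; the Mellin transform in turn carries the latter to multiplication by $e^{-is\xi}$ on $L^2(\mathbb{R})$. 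Composing these unitaries produces $\Phi\colon L^2_a(W_\alpha)\to L^2(\mathbb{R})$ with $\Phi U_t\Phi^{-1}$ equal to multiplication by $e^{-it(\pi/\alpha)\xi}$, so $\Phi A\Phi^{-1}$ is multiplication by the function $\xi\mapsto-(\pi/\alpha)\xi$. This operator has spectrum $\mathbb{R}$ and uniform multiplicity $1$ — it is unitarily equivalent, via $\xi\mapsto-(\alpha/\pi)\xi$, to multiplication by the coordinate on $L^2(\mathbb{R})$ — and the same therefore holds for $A$.

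The only point requiring genuine care is the Paley–Wiener identification in the half‑plane, namely that the Laplace transform maps $L^2(\mathbb{R}_+,dx/x)$ \emph{onto} all of $L^2_a(\Pi)$; this surjectivity is where the geometry of the domain is used, and once it is available the remainder is pure bookkeeping of unitary conjugations. One could alternatively bypass the half‑plane and set up a Mellin–Bergman decomposition of $L^2_a(W_\alpha)$ directly, but the computation is essentially the same. (In passing, the diagonalization shows that $A$ is purely absolutely continuous, a fact we shall not need.)
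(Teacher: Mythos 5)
Your proof is correct, but it takes a genuinely different route from the paper's. For the spectral assertions, the paper never diagonalizes the dilation group: it proves $\sigma(A)=\mathbb{R}$ by a similarity trick --- the multiplication operator $M_t$ by $z^{it}$ is bounded and invertible on $L^2_a(W_\alpha)$ and satisfies $M_t^{-1}AM_t=A+tI$, so $\sigma(A)$ is translation-invariant and hence all of $\mathbb{R}$ --- and it proves uniform multiplicity $1$ by showing directly that each reproducing kernel $k_z$ is a cyclic vector for $A$ (the span of $\{A^nk_z\}$ contains every $w^nk_z^{(n)}(w)$, and the homogeneity $k_z(\rho w)=\rho^{-2}k_{z/\rho}(w)$ lets one extract all derivatives $f^{(n)}(z)$ from $\langle f,A^nk_z\rangle$). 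You instead produce an explicit unitary $\Phi\colon L^2_a(W_\alpha)\to L^2(\mathbb{R})$ (conformal map $z\mapsto z^{\pi/\alpha}$ to the half-plane, Laplace transform onto $L^2(\mathbb{R}_+,dx/x)$, then the Mellin/Fourier transform) conjugating $A$ to multiplication by $-(\pi/\alpha)\xi$, from which the spectrum, its multiplicity, and even its absolute continuity are read off at once. Your intertwining computations check out: $VU_t=D_{t\pi/\alpha}V$ for the weighted composition unitary $V$, the Laplace transform carries $e^s g(e^s\cdot)$ to $G(e^{-s}\cdot)$, and the Mellin transform diagonalizes the latter. The one nontrivial external input is the Bergman-space Paley--Wiener theorem for the half-plane (surjectivity of the Laplace transform), which you correctly flag; the paper avoids any such input and stays entirely inside $L^2_a(W_\alpha)$. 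In exchange, your approach delivers strictly more: an explicit spectral representation and the absolute continuity of $A$, neither of which the paper's argument yields. Your treatment of the group-theoretic part (unitarity by change of variables, strong continuity via norm continuity of $t\mapsto U_tk_z$ plus density, identification of the generator by matching the $L^2$-limit with the locally uniform limit) is somewhat more detailed than the paper's, which simply declares these steps straightforward.
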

\begin{proof}
The verification that $U_t$ is a strongly continuous group of unitaries is straightforward. The formula for $A$ follows immediately from the fact that $iA$ is the strong limit of $t^{-1}(U_t - I)$ as $t\to 0$. For $t$ real, $z^{it}$ is bounded from below and above in $W_\alpha$. Hence the operator $M_t$ of multiplication by $z^{it}$ is bounded and invertible on $L^2_a(W_\alpha)$. A computation shows that
\[M_t^{-1}AM_t = A + tI.\]
Hence $A$ is similar to $A + t$ for every $t\in\mathbb{R}$. Since the spectrum of $A$ is not empty, it must therefore be full. 

To show that the spectrum of $A$ has multiplicity $1$, we prove that for every $z \in W_\alpha$, the reproducing kernel $k_z$ of $L^2(W_\alpha)$ at $z$, see \eqref{eq:repkernel}, is a cyclic vector of $A$. Suppose that $f \in L^2(W_\alpha)$ is orthogonal to $\textrm{span} \{A^n k_z \, : \, n\geq 0 \}$. Clearly every function $g_n(w) = w^n k_z^{(n)}(w)$ is in this linear span. Note that 
\[g_n(w) = \left(\frac{d}{d\rho}\right)^n k_z(\rho w)\Big|_{\rho=1},\]
and that for $\rho > 0$
\[k_z(\rho w) = \frac{1}{\rho^2} k_{z/\rho} (w).\]
Hence we have that
\[0 = \langle f, g_n \rangle_{L^2(W_\alpha)} =  \left(\frac{d}{d\rho}\right)^n \rho^{-2} f(z/\rho) \Big|_{\rho=1}.\]
Evaluating for $n=0, 1, 2, \ldots$ we find that $f^{(n)}(z) = 0$ for every $n \geq 0$, proving that $f = 0$.
\end{proof}

Let $J : L^2_a(W_\alpha) \to L^2_a(W_\alpha)$ be the (anti-linear) conjugation given by $Jf(z) = \overline{f(\bar{z})}$. Then $T_{W_\alpha} J$ is a self-adjoint operator on $L^2_a(W_\alpha)$ which additionally is $J$-symmetric \cite{GPP14} in the sense that $T_{W_\alpha} J = J(T_{W_\alpha} J) J = J T_{W_\alpha}$. 

\begin{lem} \label{lem:specfa}
For $0 < \alpha < \pi$, $T_{W_\alpha} J$ is a positive operator, with spectrum 
\[\sigma(T_{W_\alpha} J) = \left \{x \in \mathbb{R} \, : \, 0 \leq x \leq 1-\frac{\alpha}{\pi} \right\} \]
of uniform multiplicity $2$.
\end{lem}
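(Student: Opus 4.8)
The plan is to recognize $T_{W_\alpha}J$ as a bounded Borel function of the self-adjoint generator $A$ from Lemma~\ref{lem:Acomp}, and then to read off the spectrum and its multiplicity from elementary properties of that function. Substituting $g=Jf$ in \eqref{eq:twcomp} and using $g(e^{t}x)=\overline{f(e^{t}x)}$ for real $x>0$, together with the identity $e^{t}|e^{i\alpha}e^{t}-1|^{-2}=\tfrac12(\cosh t-\cos\alpha)^{-1}$, one finds
\[
T_{W_\alpha}Jf(x)=\frac{\sin\alpha}{\pi}\int_{-\infty}^{\infty}(U_tf)(x)\,\phi(t)\,dt,\qquad \phi(t)=\frac{1}{2(\cosh t-\cos\alpha)},
\]
for every $x>0$. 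The function $\phi$ is nonnegative, continuous, and decays like $e^{-|t|}$, so it lies in $L^{1}(\mathbb{R})$; hence $\frac{\sin\alpha}{\pi}\int\phi(t)U_t\,dt$ is a bounded operator on $L^2_a(W_\alpha)$, and by continuity of point evaluations on the Bergman space its value at $x>0$ is exactly the integral above. Since two elements of $L^2_a(W_\alpha)$ that agree on $(0,\infty)$ coincide, and since \eqref{eq:twcomp} is valid on the dense set of finite linear combinations of the kernels \eqref{eq:repkernel}, we obtain the operator identity
\[
T_{W_\alpha}J=\frac{\sin\alpha}{\pi}\int_{-\infty}^{\infty}\phi(t)\,U_t\,dt=\frac{\sin\alpha}{\pi}\int_{-\infty}^{\infty}\phi(t)\,e^{itA}\,dt=\psi(A),\qquad \psi(\xi):=\frac{\sin\alpha}{\pi}\int_{-\infty}^{\infty}e^{it\xi}\phi(t)\,dt,
\]
the last equality being the functional calculus of $A$.

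Next I would evaluate the Fourier integral defining $\psi$. This is classical --- summing residues over the poles $t=i\alpha$ and $t=i(2\pi-\alpha)$ of $\phi$, or consulting a standard table, gives, for $0<\alpha<\pi$,
\[
\int_{-\infty}^{\infty}\frac{e^{it\xi}}{\cosh t-\cos\alpha}\,dt=\frac{2\pi}{\sin\alpha}\cdot\frac{\sinh((\pi-\alpha)\xi)}{\sinh(\pi\xi)},
\]
and therefore $\psi(\xi)=\sinh((\pi-\alpha)\xi)/\sinh(\pi\xi)$, with $\psi(0)=1-\alpha/\pi$ by continuity. For $0<\alpha<\pi$ this $\psi$ is even, smooth, strictly positive, tends to $0$ at $\pm\infty$, and is strictly decreasing on $[0,\infty)$; the last assertion follows from $(\log\psi)'(\xi)=(\pi-\alpha)\coth((\pi-\alpha)\xi)-\pi\coth(\pi\xi)<0$ for $\xi>0$, which reduces to the strict monotonicity in $c>0$ of $c\mapsto c\coth(c\xi)$. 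Consequently $\psi$ carries $\mathbb{R}$ onto $(0,1-\alpha/\pi]$, attaining $1-\alpha/\pi$ only at $\xi=0$ and every value in $(0,1-\alpha/\pi)$ at exactly two points $\pm\xi_\lambda$.

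The conclusions of the lemma then follow from the spectral theorem. Positivity is immediate: $\psi\ge 0$ gives $T_{W_\alpha}J=\psi(A)\ge 0$. By Lemma~\ref{lem:Acomp}, $A$ is unitarily equivalent to multiplication by $\xi$ on $L^2(\mathbb{R},\mu)$ for a Borel measure $\mu$ with $\supp\mu=\mathbb{R}$, and $\mu$ has no atoms: an atom would be an eigenvalue of $A$, and since $A$ is similar to $A+t$ for every $t$ this would make every real number an eigenvalue, impossible in the separable space $L^2_a(W_\alpha)$. Under this identification $T_{W_\alpha}J$ is multiplication by $\psi(\xi)$, so its spectrum is the essential range of $\psi$, which (as $\supp\mu=\mathbb{R}$ and $\psi$ is continuous) equals $\overline{\psi(\mathbb{R})}=\{x\in\mathbb{R}:0\le x\le 1-\alpha/\pi\}$. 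For the multiplicity, decompose $L^2(\mathbb{R},\mu)=L^2((-\infty,0),\mu)\oplus L^2((0,\infty),\mu)$, the atomless set $\{0\}$ contributing nothing; on each half $\psi$ is a homeomorphism onto $(0,1-\alpha/\pi)$, so the corresponding restriction of $\psi(A)$ is unitarily equivalent to multiplication by the identity on $L^2((0,1-\alpha/\pi),\nu_\pm)$, where $\nu_\pm$ are the pushforward measures --- a multiplicity-one operator with spectrum $[0,1-\alpha/\pi]$. Finally, the conjugation $J$ satisfies $JAJ=-A$ (a one-line computation with $Jf(z)=\overline{f(\bar z)}$), so $J$ maps the range of $E_A(0,\infty)$ anti-unitarily onto the range of $E_A(-\infty,0)$ and, $\psi$ being even, commutes with $\psi(A)$ and thereby intertwines the two halves; hence $\nu_+$ and $\nu_-$ are mutually absolutely continuous and $T_{W_\alpha}J$ has uniform multiplicity $2$ on $[0,1-\alpha/\pi]$.

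The computational heart is the Fourier integral, but that is routine. The step I expect to be the real obstacle is the multiplicity count: one has to combine the uniform multiplicity one of $A$ with the facts that $\psi$ is two-to-one away from its maximum and that $J$ implements $A\mapsto -A$, and one has to invoke the separability of $L^2_a(W_\alpha)$ to exclude an atom of $\mu$ at $0$ --- only then does one obtain the uniform multiplicity $2$ claimed, rather than merely ``multiplicity $2$ almost everywhere'' with a possible stray eigenvalue at the endpoint $1-\alpha/\pi$.
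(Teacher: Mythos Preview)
Your argument is correct and follows essentially the same route as the paper: both recognize $T_{W_\alpha}J$ as $\psi(A)$ for the generator $A$ of the dilation group, compute $\psi(\xi)=\sinh((\pi-\alpha)\xi)/\sinh(\pi\xi)$ (the paper via a Mellin transform, you via residues), and read off the spectrum from the range of $\psi$ combined with $\sigma(A)=\mathbb{R}$. The only substantive difference is in the multiplicity count: the paper simply invokes the abstract multiplicity theory for pushforwards of spectral measures, whereas you supply the concrete mechanism --- the identity $JAJ=-A$ forcing the scalar spectral measure of $A$ to be symmetric, together with the separability argument excluding an atom at $\xi=0$ --- that makes the uniform multiplicity $2$ genuinely uniform, including at the endpoint $1-\alpha/\pi$.
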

\begin{proof}
For $z$ with $|\Im z| < 1$, let
\[F(z) = \frac{\sin\alpha}{\pi}\int_{-\infty}^{\infty}e^{itz} \frac{e^t}{|e^{i\alpha}e^t-1|^2}\, dt.  \]
Then $F(A)$ is the operator on $L^2_a(W_\alpha)$ such that
\begin{align*}
F(A) f(z) &= \frac{\sin\alpha}{\pi}\int_{-\infty}^{\infty}e^{itA} f(z) \frac{e^t}{|e^{i\alpha}e^t-1|^2}\, dt \\ &= \frac{\sin\alpha}{\pi}\int_{-\infty}^{\infty} e^tf(e^tz) \frac{e^t}{|e^{i\alpha}e^t-1|^2}\, dt. 
\end{align*}
Comparing with \eqref{eq:twcomp} it is now clear that $F(A) = T_{W_\alpha} J$. 

We already know that $F(A) = T_{W_\alpha} J$ is a self-adjoint operator. The change of variable $s=e^{it}$ gives us that
\[F(x) = \int_0^\infty s^{ix}\frac{s}{s^2 - 2s \cos\alpha +1} \, \frac{ds}{s}, \qquad x \in \mathbb{R}.\]
This Mellin transform can be computed by ``partial fractions'' (see also \cite{Mitrea02}, pp. 453), which yields
\[F(x) = \frac{\sin(i(\pi-\alpha)x)}{\sin(i\pi x)}, \qquad x \in \mathbb{R}.\]
$F$ is thus a smooth even positive function on $\mathbb{R}$, such that $F(0) = 1-\alpha/\pi$, $F$ is decreasing for $x \geq 0$ and $F(x)\to 0$ as $x \to \infty$. The statement of the lemma now follows in view of Lemma~\ref{lem:Acomp} and the spectral theorem for unbounded self-adjoint operators with its associated multiplicity theory. See \cite{Nelson69} for a remarkably clear presentation of the multiplicity theory, and \cite{AK72} for its application to the pushforward measure $\mu \circ F^{-1}$, $\mu$ a scalar spectral measure for $A$.
\end{proof}
We can now prove the theorem by a comparison of $T_{W_\alpha}$ and $T_{W_\alpha} J$.

\begin{proof}[Proof of Theorem \ref{thm:specwedge}.]
Since $T_{W_\alpha}J$ is self-adjoint and $J$-symmetric, it holds that $JT_{W_\alpha} J = T_{W_\alpha}$. Therefore $(T_{W_\alpha}J)^2 = (T_{W_\alpha})^2$. Hence the theorem follows from Lemma \ref{lem:specfa}, the spectral theorem, and the symmetry of the spectrum of $T_{W_\alpha}$.
\end{proof}

%$\sigma_\mathbb{R}(T_{W_\alpha})$ is symmetric about the origin, so it is enough to consider $\lambda > 0$. For the sake of clarity, let us first consider the case of an eigenvalue $\lambda$.
%Suppose that $f \neq 0$ satisfies $T_{W_\alpha} f = \lambda f$. Then we see from \eqref{eq:twcomp} that $T_{W_\alpha} J f(z) = \lambda J f(z)$, first for real $z=x$, then for all $z$ by analytic extension. It could not be that $f = -Jf$, for then $T_{W_\alpha} Jf = -\lambda f$, a contradiction to Lemma \ref{lem:specfa}. Hence $f+Jf$ is a non-zero eigenvector for $T_{W_\alpha} J$, since $J(f+Jf) = f+Jf$ and $T_{W_\alpha}(f + Jf) = \lambda(f+Jf)$.

%Conversely, suppose that $T_{W_\alpha} Jf = \lambda f$. Then $T_{W_\alpha} f = \lambda Jf$ so that $f + Jf$ is an eigenvector of $T_{W_\alpha}$. If $f = -Jf$, then $if$ is an eigenvector.

%We know from the general theory of \cite{PP14} that any point of $\sigma_\mathbb{R}(T_{W_\alpha})$ is an approximate eigenvalue. As for $T_{W_\alpha} J = F(A)$, its adjoint is given by 
%\[F(A)^\ast f(z) = \frac{\sin\alpha}{\pi}\int_{-\infty}^{\infty}e^{-itA} f(z) \frac{e^t}{|e^{i\alpha}e^t-1|^2}\, dt,\]
%from which it is plain that $T_{W_\alpha} J$ is a normal operator. Hence, by the spectral theorem, its spectrum is also characterized by approximate eigenvalues. The above argument for eigenvectors shows almost verbatim that $\lambda \in \mathbb{R}$ is an approximate eigenvalue for $T_{W_\alpha}$ if and only if it is an approximate eigenvalue for $T_{W_\alpha} J$.

\section{General curvilinear polygons}
In this section we shall completely determine the essential spectrum of the Neumann--Poincar\'e operator associated with a curvilinear polygon $\Omega \subset \mathbb{C}$.
\begin{thm} \label{thm:specomega}
Let $K : H^{1/2}(\partial \Omega) \to H^{1/2}(\partial \Omega)$ be the Neumann--Poincar\'e operator of a $C^{2}$-smooth curvilinear polygon $\Omega \subset \mathbb{C}$ with angles $\alpha_1, \ldots, \alpha_N$. Then
\[
\sigma_{\ess}(K) = \left\{x\in \mathbb{R} \, : \, |x| \leq \max_{1 \leq j \leq N}\left|1-\frac{\alpha_j}{\pi}\right| \right\}.
\]
\end{thm}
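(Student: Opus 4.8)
The plan is to localize $K$ at the vertices and to identify each local contribution with the wedge spectrum of Theorem~\ref{thm:specwedge}.

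First I would localize. Fix cut-offs $\rho_1,\dots,\rho_N$ and $\rho_{N+1}=1-\sum_{j\le N}\rho_j$ as in Lemma~\ref{lem:cutoffcpct}, with pairwise disjoint supports and $\rho_j\equiv1$ near $a_j$. Writing $K=\sum_{j,k=1}^{N+1}M_{\rho_j}KM_{\rho_k}$ and discarding, by Lemma~\ref{lem:cutoffcpct}, every off-diagonal term together with the term $j=k=N+1$, one obtains $K\equiv\sum_{j=1}^{N}T_j$ modulo compacts, where $T_j:=M_{\rho_j}KM_{\rho_j}$. Since the supports are disjoint, $T_jT_k=T_kT_j=0$ for $j\ne k$, and since moreover each $T_j$ annihilates the infinite-dimensional subspace $\{f:\rho_jf=0\}$, a short argument in the Calkin algebra gives
\[
\sigma_{\ess}(K)=\bigcup_{j=1}^{N}\sigma_{\ess}(T_j).
\]
The operator $T_j$ only involves the kernel $k(x,y)=\langle y-x,n_y\rangle/|y-x|^{2}$ for $x,y$ on the two $C^2$ arcs meeting at $a_j$, i.e.\ it depends only on the germ of $\partial\Omega$ at the corner of opening $\alpha_j$.

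The heart of the argument, and the step I expect to be hardest, is the claim that $\sigma_{\ess}(T_j)$ is a function of $\alpha_j$ alone; this is where conformal mapping enters. Given two $C^2$ curvilinear polygons and vertices with the same opening, let $\phi$ be a conformal map between suitable boundary neighbourhoods of the two vertices, carrying arc to arc and vertex to vertex. Since the openings agree and the arcs are $C^2$, Kellogg--Warschawski type boundary regularity makes $\phi$ extend to a $C^1$ diffeomorphism near the vertex with $\phi'\ne0$, so the induced composition operator $C_\phi$ is bounded and boundedly invertible between the relevant $H^{1/2}$-spaces. I would then show that $C_\phi$ conjugates the corresponding localized operators $M_\rho K M_\rho$ to one another modulo a compact operator. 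The mechanism is that the conformal Jacobian cancels in the Neumann--Poincar\'e kernel: under $\phi$ the unit normal $n_y$ rotates by $\arg\phi'$, while $\phi'$ also enters squared through $|y-x|^{-2}$ and once through the arclength element, so that $k_{1}(\phi(u),\phi(v))\,|\phi'(v)|=k_{2}(u,v)+O(1)$ near the vertex, where $k_1,k_2$ are the kernels of the two polygons and the remainder is one degree less singular than the principal part. After the tangential differentiation used in the proof of Lemma~\ref{lem:cutoffcpct} the remainder becomes a kernel of size $\lesssim|u-v|^{-1}$ still of the form required there, so the difference operator maps $H^{1/2}$ boundedly into $H^1$ and is compact. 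Thus the ``local essential spectrum at a vertex'' depends only on its opening.

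To evaluate this function, I would apply the previous paragraph to the bounded curvilinear polygon $\Omega_j:=L_j(W_{\alpha_j})$, which has exactly two vertices --- the images of $0$ and of $\infty$ --- both of opening $\alpha_j$, since a wedge near $\infty$ is again a wedge of the same opening. Localizing $K_{\Omega_j}$ as in the first paragraph and applying the universality just proved (first to relate $\sigma_{\ess}(T_j)$ to the local essential spectrum at a vertex of $\Omega_j$, then to identify the two vertices of $\Omega_j$ with one another) yields $\sigma_{\ess}(K_{\Omega_j})=\sigma_{\ess}(T_j)$; on the other hand Theorem~\ref{thm:specwedge} gives $\sigma_{\ess}(K_{\Omega_j})=\{x\in\mathbb{R}:|x|\le|1-\alpha_j/\pi|\}$. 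Hence $\sigma_{\ess}(T_j)=\{x\in\mathbb{R}:|x|\le|1-\alpha_j/\pi|\}$, and combining with the first paragraph,
\[
\sigma_{\ess}(K)=\bigcup_{j=1}^{N}\{x\in\mathbb{R}:|x|\le|1-\alpha_j/\pi|\}=\{x\in\mathbb{R}:|x|\le\max_{1\le j\le N}|1-\alpha_j/\pi|\}.
\]
The genuinely delicate point is the compactness in the second step: carrying out the Jacobian cancellation cleanly, controlling the tangential derivative of the remainder, and in particular making sure that neither the conformal distortion nor the curvature of the arcs manufactures a fresh $|u-v|^{-2}$ singularity. The localization, the Calkin-algebra bookkeeping, and the appeal to Theorem~\ref{thm:specwedge} are by comparison routine.
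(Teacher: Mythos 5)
Your overall strategy — localize at the vertices, show that the local contribution to the essential spectrum depends only on the opening angle by conformal mapping, and then evaluate that contribution by feeding the lens domain $L_j(W_{\alpha_j})$ into Theorem~\ref{thm:specwedge} — is exactly the strategy of the paper (Lemmas~\ref{lem:omegalocal}--\ref{lem:onecorner}). The one structural difference is cosmetic: the paper handles the two vertices of the lens by the reflection symmetry of $V_\alpha$ (Lemma~\ref{lem:wedgelocal}) and then introduces an intermediate single-corner reference domain $U_\alpha$, whereas you fold both steps into your ``universality'' claim; that is fine and arguably a little cleaner bookkeeping.

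The gap is in the ``universality'' claim itself, which you correctly identify as the delicate step but then close with an estimate that does not hold. With $\phi$ only $C^{1,b}$ near the vertex (which is what Kellogg--Warschawski/\cite[Lemma 4.3]{PP14} actually gives, not $C^{1,1}$), the cancellation you describe leaves a remainder of order $|u-v|^{b-1}$, not $O(1)$: writing $\phi(v)-\phi(u)=\phi'(v)(v-u)+O(|u-v|^{1+b})$ and $n_{\phi(v)}=\frac{\phi'(v)}{|\phi'(v)|}n_v$, the error terms enter the numerator at size $|u-v|^{1+b}$ against a denominator of size $|u-v|^{2}$. Moreover, the tangential-differentiation trick in the proof of Lemma~\ref{lem:cutoffcpct} rests on the exact Plemelj/Maue identity $\frac{d}{d\sigma}Kf=-K^{*}\frac{d}{d\sigma}f$, which is specific to the full Neumann--Poincar\'e kernel and does not pass to the remainder kernel; differentiating an $O(|u-v|^{b-1})$ kernel naively produces $O(|u-v|^{b-2})$, which is outside the compactness regime of that lemma. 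This is precisely the point the paper does not prove from scratch but delegates to \cite[Lemma 4.4]{PP14} (that a $C^{1,b}$ Riemann map intertwines the NP operators modulo compacts); as written, your sketch does not supply a substitute for that lemma, so the proof has a genuine hole at its central step.
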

For an operator $T : H^{1/2}(\partial \Omega) \to H^{1/2}(\partial \Omega)$, denote by $\sigma_{\ea}(T)$ the essential spectrum of $T$ in the sense of approximate eigenvalues. That is, $\lambda \in \sigma_{\ea}(T)$ if and only if there is a bounded sequence $(f_n)_{n=1}^\infty \subset H^{1/2}(\partial \Omega)$ having no convergent subsequence, such that $(T-\lambda)f_n \to 0$. We call $(f_n)$ a singular sequence. Note that if  $S : H^{1/2}(\partial \Omega) \to H^{1/2}(\partial \Omega)$ is another operator such that $S-T$ is compact, then $\sigma_{\ea}(S) = \sigma_{\ea}(T)$.
\begin{lem}
For the Neumann--Poincar\'e operator $K : H^{1/2}(\partial \Omega) \to H^{1/2}(\partial \Omega)$ it holds that
\[\sigma_{\ess}(K) = \sigma_{\ea}(K).\]
\end{lem}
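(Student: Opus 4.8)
The plan is to realize $K$, up to a finite-rank modification and a similarity, as a self-adjoint operator, and then to use that the Fredholm essential spectrum and the approximate-eigenvalue essential spectrum coincide for self-adjoint operators. Here $\sigma_{\ess}(K)$ is understood as the Fredholm essential spectrum, $\{\lambda\in\mathbb{C} : K-\lambda$ is not a Fredholm operator$\}$. One inclusion is automatic: if $K-\lambda$ is Fredholm it has finite-dimensional kernel and closed range, so, writing $H^{1/2}(\partial\Omega)=\ker(K-\lambda)\oplus Z$ with $(K-\lambda)|_Z$ bounded below, any bounded sequence $(f_n)$ with $(K-\lambda)f_n\to0$ splits into a piece in the finite-dimensional space $\ker(K-\lambda)$ and a piece in $Z$ tending to $0$; both have convergent subsequences, so $(f_n)$ is not singular and $\lambda\notin\sigma_{\ea}(K)$. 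Thus $\sigma_{\ea}(K)\subseteq\sigma_{\ess}(K)$, and the content of the lemma is the reverse inclusion.

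To prove $\sigma_{\ess}(K)\subseteq\sigma_{\ea}(K)$, I would first pass to the quotient by the constant functions. By the jump formula \eqref{eq:jumpformula} one has $K1=1$ (the double-layer potential of the constant $1$ equals $1$ in $\Omega$ and $0$ in $\overline{\Omega}^c$), so $\mathbb{C}\cdot1$ is a one-dimensional $K$-invariant subspace and $K$ descends to an operator $\bar K$ on $H^{1/2}(\partial\Omega)/\mathbb{C}$. Fixing a closed complement $Z$ of $\mathbb{C}\cdot1$ and identifying $Z\cong H^{1/2}(\partial\Omega)/\mathbb{C}$, the operator $K$ is block upper-triangular with diagonal blocks $K|_{\mathbb{C}\cdot1}$ and $\bar K$ and off-diagonal block landing in the one-dimensional space $\mathbb{C}\cdot1$; hence $K$ differs from $(K|_{\mathbb{C}\cdot1})\oplus\bar K$ by a finite-rank operator. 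Since neither $\sigma_{\ess}$ nor $\sigma_{\ea}$ is affected by a finite-rank (hence compact) perturbation, nor by dropping a direct summand acting on a finite-dimensional space, it follows that $\sigma_{\ess}(K)=\sigma_{\ess}(\bar K)$ and $\sigma_{\ea}(K)=\sigma_{\ea}(\bar K)$.

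By \cite{PP14}, $\bar K$ on $H^{1/2}(\partial\Omega)/\mathbb{C}$ is similar to a self-adjoint operator $A$: write $\bar K=W^{-1}AW$ with $W$ bounded and boundedly invertible. Both essential spectra are similarity invariants ($\bar K-\lambda$ is Fredholm iff $A-\lambda$ is; $(f_n)$ is singular for $\bar K-\lambda$ iff $(Wf_n)$ is singular for $A-\lambda$), so it suffices to check $\sigma_{\ess}(A)=\sigma_{\ea}(A)$. If $A-\lambda$ has finite-dimensional kernel and closed range, then, by self-adjointness, $\mathrm{ran}(A-\lambda)=\ker(A-\lambda)^\perp$ has finite codimension, so $A-\lambda$ is Fredholm; the converse implication is trivial. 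Therefore $\lambda\notin\sigma_{\ess}(A)$ precisely when $A-\lambda$ has finite-dimensional kernel and closed range, which on a Hilbert space is precisely the condition for the absence of a singular sequence, i.e. $\lambda\notin\sigma_{\ea}(A)$. Chaining the equalities, $\sigma_{\ess}(K)=\sigma_{\ess}(\bar K)=\sigma_{\ess}(A)=\sigma_{\ea}(A)=\sigma_{\ea}(\bar K)=\sigma_{\ea}(K)$.

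I do not expect a genuine difficulty here; the argument is essentially bookkeeping resting on the similarity established in \cite{PP14}. The only points that merit a moment's care are fixing the convention for $\sigma_{\ess}$ and confirming that $\mathbb{C}\cdot1$ is truly $K$-invariant, so that the passage to $H^{1/2}(\partial\Omega)/\mathbb{C}$ — the space on which the similarity with a self-adjoint operator takes place — is legitimate.
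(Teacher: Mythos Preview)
Your proof is correct and follows essentially the same approach as the paper: pass to $H^{1/2}(\partial\Omega)/\mathbb{C}$ using $K1=1$, invoke the similarity to a self-adjoint operator from \cite{PP14}, and use that the two notions of essential spectrum coincide for self-adjoint operators. The paper's proof is simply a terser version of yours, omitting the explicit block-triangular/finite-rank discussion and the verification of $\sigma_{\ess}(A)=\sigma_{\ea}(A)$ in the self-adjoint case.
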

\begin{proof}
Consider first $K$ acting on $H^{1/2}(\partial \Omega)/\mathbb{C}$, which only eliminates the simple isolated eigenvalue $\lambda = 1$ from the spectrum of $K$ ($K1 = 1$). Since we also know that $K$ is similar to a self-adjoint operator on $H^{1/2}(\partial \Omega)/\mathbb{C}$, we have that
\[\sigma_{\ess}(K) = \sigma_{\ess}\left(K|_{H^{1/2}(\partial \Omega)/\mathbb{C}}\right) = \sigma_{\ea}\left(K|_{H^{1/2}(\partial \Omega)/\mathbb{C}}\right) \subset \sigma_{\ea}(K).\]
The reverse inclusion is obvious.
\end{proof}
We now begin the proof of Theorem \ref{thm:specomega} with a localization lemma.  Recall that for a smooth function $\rho$ on $\partial \Omega$, we denote by $M_\rho$ the operator of multiplication by $\rho$ on $H^{1/2}(\partial \Omega)$. 

\begin{lem} \label{lem:omegalocal}
Let $\Omega$ be a $C^{2}$-smooth curvilinear polygon with vertices $(a_j)_{j=1}^N$. For each $j$, $1 \leq j\leq N$, let $\rho_j$ be a smooth function on $\partial \Omega$ such that $\rho(x) = 1$ for all $x$ in a neighborhood of $a_j$. Furthermore, suppose that the supports of $\rho_j$ are pairwise disjoint (at a positive distance apart). Then, letting $K : H^{1/2}(\partial \Omega) \to H^{1/2}(\partial \Omega)$ be the Neumann--Poincar\'e operator,
\begin{equation} \label{eq:local}
K - \sum_{j=1}^N M_{\rho_j} K M_{\rho_j} \textrm{ is compact.}
\end{equation}
Furthermore, 
\begin{equation} \label{eq:specunion}
\sigma_{\ess}(K) = \bigcup_{j=1}^N \sigma_{\ea}(M_{\rho_j} K M_{\rho_j}).
\end{equation}
\end{lem}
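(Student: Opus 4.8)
The plan is to establish the two assertions in turn, the first being essentially a bookkeeping exercise with the compactness results already proved, and the second a soft consequence of the localization together with the fact that $K$ is similar to a self-adjoint operator. First I would expand
\[
K = \left(\sum_{j=1}^{N+1} M_{\rho_j}\right) K \left(\sum_{k=1}^{N+1} M_{\rho_k}\right) = \sum_{j,k=1}^{N+1} M_{\rho_j} K M_{\rho_k},
\]
using that $\sum_{j=1}^{N+1}\rho_j \equiv 1$ and that $M_\rho$ is bounded on $H^{1/2}(\partial\Omega)$ by Lemma~\ref{lem:multbound}. By Lemma~\ref{lem:cutoffcpct}, every term with $j\ne k$, and the term with $j=k=N+1$, is compact. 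Hence $K - \sum_{j=1}^N M_{\rho_j} K M_{\rho_j}$ is a finite sum of compact operators, which is compact; this is \eqref{eq:local}.

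For \eqref{eq:specunion}, the containment of the right-hand side in the left follows immediately: since $K - \sum_j M_{\rho_j} K M_{\rho_j}$ is compact, and since each cross term $M_{\rho_j} K M_{\rho_k}$ ($j\ne k$) is compact, a singular sequence for $M_{\rho_j} K M_{\rho_j}$ can be taken supported essentially near $a_j$ (multiply by a cutoff equal to $1$ on $\supp\rho_j$), and then it is a singular sequence for $\sum_k M_{\rho_k}KM_{\rho_k}$, hence for $K$ up to a compact error, so $\sigma_{\ea}(M_{\rho_j}KM_{\rho_j})\subset\sigma_{\ea}(K)=\sigma_{\ess}(K)$. The reverse inclusion is the substantive point. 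Here I would argue as follows. Given $\lambda\in\sigma_{\ess}(K)=\sigma_{\ea}(K)$, pick a singular sequence $(f_n)$ for $K$ at $\lambda$. Decompose $f_n = \sum_{j=1}^{N+1} M_{\rho_j}f_n$. Using $(K-\lambda)f_n\to 0$ and the compactness of all off-diagonal and $(N+1)$-st pieces, one gets $\sum_{j=1}^N (M_{\rho_j}KM_{\rho_j}-\lambda M_{\rho_j})f_n \to$ (compact error applied to a bounded sequence), and after passing to a subsequence along which those compact terms converge, $\sum_{j=1}^N(M_{\rho_j}KM_{\rho_j}-\lambda)(M_{\rho_j}f_n)\to 0$ while $M_{\rho_{N+1}}f_n$ converges. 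Since $(f_n)$ has no convergent subsequence, at least one of the sequences $(M_{\rho_j}f_n)_{n}$, $1\le j\le N$, fails to be precompact along every subsequence; for that $j$ the sequence $(M_{\rho_j}f_n)$ — after a further diagonal extraction decoupling the finitely many indices, using that $\lambda$ is real and $K$ is similar to self-adjoint so that the summands behave like an orthogonal-type decomposition up to similarity — is a singular sequence witnessing $\lambda\in\sigma_{\ea}(M_{\rho_j}KM_{\rho_j})$.

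The main obstacle is precisely this last decoupling: the operators $M_{\rho_j}KM_{\rho_j}$ act on the same space $H^{1/2}(\partial\Omega)$ and the pieces $M_{\rho_j}f_n$ are not orthogonal, so from $\sum_j (M_{\rho_j}KM_{\rho_j}-\lambda)(M_{\rho_j}f_n)\to 0$ one cannot naively conclude that each summand tends to $0$. The clean way around this is to transport the problem to the self-adjoint model: since $K$ on $H^{1/2}(\partial\Omega)/\mathbb{C}$ is similar to a self-adjoint operator, and the multiplication operators $M_{\rho_j}$ have essentially disjoint supports, one realizes the localized operators as (up to compact perturbation) a genuine orthogonal direct sum $\bigoplus_j K_j$ where $K_j$ is modeled on a neighborhood of $a_j$; for an orthogonal direct sum of self-adjoint operators $\sigma_{\ess}(\bigoplus_j K_j) = \overline{\bigcup_j \sigma_{\ess}(K_j)}$, and since each $\sigma_{\ess}(K_j)$ will turn out (in the next section, via the wedge computation) to be a closed interval, the closure is harmless. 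I would phrase the decoupling abstractly: a singular sequence for a finite orthogonal direct sum, after extraction, has its mass concentrating in a single summand, which yields a singular sequence there. Feeding this back through the similarity and the compact perturbation gives $\sigma_{\ea}(K)\subset\bigcup_j\sigma_{\ea}(M_{\rho_j}KM_{\rho_j})$, completing the proof.
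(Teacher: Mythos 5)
Your proof of \eqref{eq:local} is exactly the paper's: expand $K=\sum_{j,k=1}^{N+1}M_{\rho_j}KM_{\rho_k}$ using the partition $\rho_{N+1}=1-\sum_1^N\rho_j$, and invoke Lemma~\ref{lem:cutoffcpct} for the off-diagonal and $(N+1,N+1)$ terms. The easy inclusion $\bigcup_j\sigma_{\ea}(M_{\rho_j}KM_{\rho_j})\subset\sigma_{\ess}(K)$ is also essentially right (for $\lambda\neq 0$, multiplying a singular sequence $(f_n)$ for $M_{\rho_k}KM_{\rho_k}$ by $M_{\rho_j}$, $j\neq k$, gives $\lambda M_{\rho_j}f_n\to 0$, so all the other summands of $\sum_j M_{\rho_j}KM_{\rho_j}$ vanish on $(f_n)$; $\lambda=0$ lies trivially in both sides because $\sum_j M_{\rho_j}KM_{\rho_j}$ has infinite-dimensional kernel).

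The gap is in the reverse inclusion, and you have put your finger on exactly where it is but then proposed a repair that does not hold up. You suggest passing through the similarity to a self-adjoint operator and ``realizing the localized operators as (up to compact perturbation) a genuine orthogonal direct sum $\bigoplus_j K_j$.'' This is unjustified: the similarity $S$ taking $K$ to a self-adjoint operator has no reason to transform the non-self-adjoint, non-idempotent operators $M_{\rho_j}$ into orthogonal projections, and the subspaces of $H^{1/2}(\partial\Omega)$ consisting of functions supported in $\supp\rho_j$ are \emph{not} mutually orthogonal in the $H^{1/2}$ inner product (the norm \eqref{eq:besnorm} is non-local). The paper remarks on this explicitly in its final section: the orthogonal projections $P_j$ onto those subspaces only satisfy $P_jP_k$ compact, i.e.\ orthogonality modulo compacts, and the localizations cannot be jointly put on a normal form. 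Establishing even the weaker modulo-compacts orthogonality is a separate nontrivial claim that your sketch assumes rather than proves, and in any case it is not how the paper argues.

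The paper's decoupling is elementary and bypasses orthogonality entirely. Given $(f_n)$ with $\sum_j M_{\rho_j}KM_{\rho_j}f_n-\lambda f_n\to 0$ and no convergent subsequence, introduce an auxiliary smooth cutoff $\chi$ with $\chi\equiv 1$ on $\supp\rho_1$ and $\chi\equiv 0$ on $\supp\rho_j$ for $j\geq 2$. Then $M_\chi M_{\rho_1}=M_{\rho_1}M_\chi=M_{\rho_1}$ and $M_\chi M_{\rho_j}=0$ for $j\geq 2$, so left-multiplying the relation by $M_\chi$ annihilates all summands but the first and gives $M_{\rho_1}KM_{\rho_1}(M_\chi f_n)-\lambda M_\chi f_n\to 0$. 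Either $(M_\chi f_n)$ has no convergent subsequence, in which case it is a singular sequence for $M_{\rho_1}KM_{\rho_1}$; or it does, in which case along that subsequence $M_{1-\chi}f_n=f_n-M_\chi f_n$ has no convergent subsequence and is a singular sequence for $\sum_{j\geq 2}M_{\rho_j}KM_{\rho_j}$, and one repeats. This finite induction is what actually closes the argument, and it uses only the algebraic relations among the cutoffs, not any spectral structure of $K$.
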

\begin{proof}
We construct a smooth partition of unity of $\partial \Omega$ by letting $\rho_{N+1} = 1 - \sum_{j=1}^N \rho_j$.
We then have that
\[K = \sum_{j=1}^{N+1} \sum_{k=1}^{N+1} M_{\rho_j}KM_{\rho_k}. \]
If $j\neq k$ or $j=k=N+1$, then, by Lemma \ref{lem:cutoffcpct}, $M_{\rho_j}KM_{\rho_k}$ is compact. This gives us the validity of \eqref{eq:local}, and hence that
\[\sigma_{\ess}(K) = \sigma_{\ea}(K) = \sigma_{\ea}\left(\sum_{j=1}^N M_{\rho_j} K M_{\rho_j}\right).\]

$\lambda = 0$ clearly belongs to both sides of \eqref{eq:specunion}. Suppose now that $0 \neq \lambda \in \sigma_{\ea}(M_{\rho_k}KM_{\rho_k})$ for some $k$. Let $(f_n)_n$ be a corresponding singular sequence, so that $(M_{\rho_k}KM_{\rho_k} - \lambda)f_n \to 0$ as $n \to \infty$. Multiplying on the left by $M_{\rho_j}$ it follows that $M_{\rho_j} f_n \to 0$ for every $j\neq k$, so that $(f_n)$ is a singular sequence also for $\sum_j M_{\rho_j}KM_{\rho_j}$, proving that
\[ \bigcup_{j=1}^N\sigma_{\ea}(M_{\rho_j}KM_{\rho_j}) \subset \sigma_{\ea} \left( \sum_{j=1}^N M_{\rho_j}KM_{\rho_j} \right). \]

Conversely, suppose that 
\begin{equation}\label{eq:seplambda}
\sum_{j=1}^N M_{\rho_j}KM_{\rho_j}f_n - \lambda f_n \to 0
\end{equation}
 as $n\to\infty$, for a sequence $(f_n)$ with no convergent subsequence. Let $\chi$ be a smooth function which is $1$ on the support of $\rho_1$, $0$ on the support of $\rho_j$ for every $j \neq 1$. Multiplying \eqref{eq:seplambda} with $\chi$ and noting that $M_\chi M_{\rho_1} = M_{\rho_1}  M_\chi = M_{\rho_1}$ and that $M_\chi M_{\rho_j} = 0$ for every other $j$, it follows that $M_{\rho_1}KM_{\rho_1} M_\chi f_n - \lambda M_\chi f_n \to 0$. Hence  $M_\chi f_n$ is a singular sequence for $M_{\rho_1}KM_{\rho_1}$, unless $(M_\chi f_n)$ has a convergent subsequence $(M_\chi f_k)$.  In the latter case $M_{1-\chi} f_k$ is a singular sequence for $\sum_{j=2}^N M_{\rho_j}KM_{\rho_j}$. Now the argument of this paragraph may be repeated until one finds a singular sequence for $ M_{\rho_j}KM_{\rho_j}$, for some $j$. We have hence proved that
\[ \sigma_{\ea}\left( \sum_{j=1}^N M_{\rho_j}KM_{\rho_j} \right) \subset \bigcup_{j=1}^N\sigma_{\ea}(M_{\rho_j}KM_{\rho_j}). \qedhere\]
\end{proof}
Let $L(z) = (z+1)/(z-1)$, and let $V_\alpha = L(W_\alpha)$, where $W_\alpha$ is the wedge of the preceding section, $0 < \alpha < 2\pi$. Then $V_\alpha$ is a lens domain, symmetric around the real and imaginary axis, with corners of angle $\alpha$ at $-1$ and $1$. The next lemma says that the two corners have equal contribution to the essential spectrum of $K : H^{1/2}(\partial V_\alpha) \to H^{1/2}(\partial V_\alpha)$. 

\begin{lem}\label{lem:wedgelocal}
Let $\rho$ be a smooth function on $\partial V_\alpha$, compactly supported in the left half-plane, such that $\rho(x) = 1$ for all $x$ in a neighborhood of $-1$. Then 
\[\sigma_{\ea}(M_{\rho}KM_{\rho}) = \sigma_{\ess}(K) = \left\{x\in \mathbb{R} \, : \, |x| \leq \left|1-\frac{\alpha}{\pi}\right| \right\}.\]
\end{lem}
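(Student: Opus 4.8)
The plan is to combine the symmetry of the lens domain $V_\alpha$ with the localization machinery of Lemma~\ref{lem:omegalocal} and the wedge computation of Theorem~\ref{thm:specwedge}. First I would apply Lemma~\ref{lem:omegalocal} with $N = 2$, choosing cut-offs $\rho_1 = \rho$ supported near $-1$ in the left half-plane and $\rho_2$ supported near $+1$ in the right half-plane, obtained from $\rho_1$ by the reflection $z \mapsto -z$ (which is a symmetry of $V_\alpha$). This yields $\sigma_{\ess}(K) = \sigma_{\ea}(M_{\rho_1}KM_{\rho_1}) \cup \sigma_{\ea}(M_{\rho_2}KM_{\rho_2})$. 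The reflection symmetry conjugates $M_{\rho_1}KM_{\rho_1}$ to $M_{\rho_2}KM_{\rho_2}$ (the NP kernel $k(x,y)$ is invariant under the isometry $z \mapsto -z$), so the two essential approximate spectra coincide and $\sigma_{\ess}(K) = \sigma_{\ea}(M_\rho K M_\rho)$. Combined with Theorem~\ref{thm:specwedge}, which gives $\sigma_{\ess}(K) = \{x : |x| \le |1-\alpha/\pi|\}$ for this very domain $V_\alpha = L(W_\alpha)$, we obtain both claimed equalities at once.

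The one point that needs a genuine argument is that $\sigma_{\ea}(M_\rho K M_\rho)$ really depends only on the corner at $-1$ and not on the global geometry of $V_\alpha$ — i.e. that localizing at a single corner gives the same answer as the full wedge. I would handle this by a further localization: since any singular sequence for $M_\rho K M_\rho$ can, by the estimates behind Lemma~\ref{lem:cutoffcpct}, be arranged to concentrate arbitrarily close to the vertex $-1$, and since near $-1$ the boundary $\partial V_\alpha$ is a $C^2$-small perturbation of the straight wedge $\partial W_\alpha$ after the conformal normalization, the operator $M_\rho K M_\rho$ differs from the corresponding localized wedge operator by a compact error. Concretely, one shrinks the support of $\rho$ and uses that the difference of the two double-layer kernels is of lower order (an extra power of $|x-y|$ from the $C^2$-smoothness of the arcs meeting at $-1$), hence compact on $H^{1/2}$ by the same Cauchy–Schwarz plus Lemma~\ref{lem:multbound} argument already used in Lemma~\ref{lem:cutoffcpct}. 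Since $\sigma_{\ea}$ is insensitive to compact perturbations, $\sigma_{\ea}(M_\rho K M_\rho) = \sigma_{\ea}(\text{localized wedge operator}) = \sigma_{\ess}(K_{W_\alpha}) = \{x : |x| \le |1-\alpha/\pi|\}$.

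The main obstacle I anticipate is making the comparison between the localized operator on $\partial V_\alpha$ near $-1$ and the localized operator on the model wedge fully rigorous: one must transport $H^{1/2}$-singular sequences along a bi-Lipschitz (indeed $C^2$ up to the corner, in suitable coordinates) chart that straightens $\partial V_\alpha$ near $-1$ onto $\partial W_\alpha$ near $0$, verify that this transport is bounded and invertible on the localized $H^{1/2}$ spaces (using the Besov characterization \eqref{eq:besnorm} and Lemma~\ref{lem:multbound}), and check that under this change of variables the NP kernel $k(x,y) = \langle y-x, n_y\rangle/|y-x|^2$ goes to the flat-wedge kernel plus an integrable/compact remainder. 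The curvature terms contribute exactly the extra decay that makes the remainder compact; assembling these estimates is the technical heart, but it is routine in the same spirit as the proof of Lemma~\ref{lem:cutoffcpct}.
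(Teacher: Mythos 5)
Your first paragraph is the paper's proof, and it is already complete: apply Lemma~\ref{lem:omegalocal} to $V_\alpha$ with $N=2$, use the reflection symmetry of the lens to identify $\sigma_{\ea}(M_{\rho_1}KM_{\rho_1})$ with $\sigma_{\ea}(M_{\rho_2}KM_{\rho_2})$, and invoke Theorem~\ref{thm:specwedge}, which indeed computes $\sigma_{\ess}(K)$ for the bounded domain $V_\alpha = L(W_\alpha)$ itself (the paper never considers $K$ directly on the unbounded $\partial W_\alpha$). The paper reflects in the imaginary axis, $z \mapsto -\bar z$, while you use $z \mapsto -z$; both are rigid motions fixing $V_\alpha$ and swapping the corners $\pm 1$, and the NP kernel is invariant under any such isometry, so either works.

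Your second and third paragraphs are a red herring here. You say that ``the one point that needs a genuine argument is that $\sigma_{\ea}(M_\rho K M_\rho)$ depends only on the corner at $-1$,'' but that is precisely what your first paragraph already delivers: Theorem~\ref{thm:specwedge} gives $\sigma_{\ess}(K_{V_\alpha})$, and the localization plus symmetry identifies it with $\sigma_{\ea}(M_\rho K M_\rho)$. No comparison with a flat-wedge operator is required or performed at this stage; indeed the paper deliberately avoids working with the NP operator on the unbounded wedge. The concern you raise is the \emph{right} concern for the next result, Lemma~\ref{lem:onecorner}, where one must show that a general $C^2$ corner of opening $\alpha_j$ contributes the same essential spectrum as the model lens corner. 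There the paper does \emph{not} compare kernels directly as you sketch (tracking the extra power of $|x-y|$ from $C^2$-smoothness and transporting singular sequences through a boundary chart); it instead builds an auxiliary one-corner domain $U_\alpha$ that coincides with $V_\alpha$ near $-1$, shows $K_{U_\alpha} - M_\chi K_{U_\alpha} M_\chi$ is compact, and then transfers the essential spectrum to $\Omega$ by a Riemann map and the conformal-perturbation results of \cite[Lemmas 4.3--4.4]{PP14}. Your direct kernel-comparison plan could plausibly be made to work, but it is a genuinely different (and, for the quantitative estimates near the vertex, harder) route than what the paper uses, and in any case it belongs to Lemma~\ref{lem:onecorner}, not to the statement at hand.
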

\begin{proof}
Let $\rho_2$ be the function obtained by reflecting $\rho_1$ in the imaginary axis. Then, by symmetry, $M_{\rho_1}KM_{\rho_1}$ is unitarily equivalent to $M_{\rho_2}KM_{\rho_2}$, and hence the two operators have the same spectrum. Applying Lemma \ref{lem:omegalocal} and Theorem \ref{thm:specwedge} we obtain the desired conclusion.
\end{proof}
Using results on perturbations by conformal mappings from \cite{PP14} allows us to handle a general corner of opening $\alpha$, not only the one coming from the wedge.
\begin{lem} \label{lem:onecorner}
Let $\Omega$ be a $C^{2}$-smooth curvilinear polygon, and let one of its vertex points be $a_j$, with corresponding angle $\alpha_j$.
Let $\rho$ be a smooth function on $\partial \Omega$ such that $\rho(x) = 1$ for all $x$ in a neighborhood of $a_j$. Then, if the support of $\rho$ is sufficiently small, it holds that
\begin{equation} \label{eq:speclocal}
\sigma_{\ea}(M_{\rho}KM_{\rho}) = \left\{x\in \mathbb{R} \, : \, |x| \leq \left|1-\frac{\alpha_j}{\pi}\right| \right\},
\end{equation}
where $K : H^{1/2}(\partial \Omega) \to H^{1/2}(\partial \Omega)$ is the Neumann--Poincar\'e operator of $\Omega$.
\end{lem}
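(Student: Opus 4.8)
The plan is to reduce the local model near the vertex $a_j$ to the wedge case already solved in Lemma~\ref{lem:wedgelocal}, by straightening the corner with a conformal map. First I would choose a small disc $B$ centered at $a_j$ so that $\partial\Omega\cap B$ consists of two $C^2$-arcs meeting at $a_j$ with interior angle $\alpha_j$, and arrange that $\supp\rho\subset B$. Since the two arcs are $C^2$, after a rigid motion placing $a_j$ at the origin there is a conformal map $\varphi$ from a neighborhood of the corner of $V_{\alpha_j}$ at $-1$ (translated to $0$) onto $B\cap\Omega$, with $\varphi(0)=0$, which takes the two boundary rays of the lens $V_{\alpha_j}$ to the two arcs of $\partial\Omega$; because the arcs are $C^2$, $\varphi$ extends to be $C^1$ up to the boundary with nonvanishing derivative, and $\varphi'(0)\neq 0$. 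The idea is that, near the vertex, $\partial\Omega$ differs from $\partial V_{\alpha_j}$ by exactly such a conformal perturbation, and the associated change in the Neumann--Poincar\'e operator is negligible for the essential spectrum.

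The key step is to invoke the results on perturbations by conformal mappings from \cite{PP14}: passing from $K$ on $\partial V_{\alpha_j}$ to $K$ on $\partial\Omega$ through the conformal map $\varphi$ affects the operator, in the energy norm, only by a compact operator, provided one has localized near the corner where $\varphi$ is $C^1$ with $\varphi'\neq 0$. Concretely, I would write $M_\rho K_\Omega M_\rho$ and compare it, via the unitary or bounded-invertible transport induced by $\varphi$ on the energy spaces (here the scale-invariance and conformal naturality of $T_\Omega$ from \cite{PP14} is what makes this clean), with $M_{\tilde\rho} K_{V_{\alpha_j}} M_{\tilde\rho}$, where $\tilde\rho=\rho\circ\varphi$ is again a smooth cutoff equal to $1$ near the corner $-1$ of $V_{\alpha_j}$. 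The difference should be compact because away from the vertex the kernels are smooth, and near the vertex the $C^2$-arcs are tangent to their chords to second order, so the discrepancy between the true double-layer kernel and the wedge kernel is weakly singular enough to define a compact operator on $H^{1/2}$ — this is precisely the content of the cut-off compactness estimates in Lemma~\ref{lem:cutoffcpct} combined with the conformal-perturbation lemma of \cite{PP14}. Hence $\sigma_{\ea}(M_\rho K_\Omega M_\rho)=\sigma_{\ea}(M_{\tilde\rho}K_{V_{\alpha_j}}M_{\tilde\rho})$.

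Finally, by Lemma~\ref{lem:wedgelocal} the right-hand side equals $\{x\in\mathbb R:|x|\le|1-\alpha_j/\pi|\}$, which gives \eqref{eq:speclocal}. The requirement that $\supp\rho$ be sufficiently small enters twice: once so that $\partial\Omega\cap\supp\rho$ genuinely looks like a single corner (only two arcs, no other vertices), and once so that the conformal straightening map $\varphi$ is defined and well-behaved on all of $\supp\rho$; shrinking $\rho$ further only shrinks $\supp\tilde\rho$, and since $\sigma_{\ea}(M_{\tilde\rho}K_{V_{\alpha_j}}M_{\tilde\rho})$ is independent of the choice of such a cutoff (again by Lemma~\ref{lem:wedgelocal} and the locality argument of Lemma~\ref{lem:omegalocal}), the conclusion is unaffected.

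I expect the main obstacle to be the conformal-perturbation step: one must verify carefully that transporting the energy space $H^{1/2}(\partial\Omega)$ to $H^{1/2}(\partial V_{\alpha_j})$ by $\varphi$ is a bounded-invertible map that intertwines the two Neumann--Poincar\'e operators up to a compact error, uniformly enough that the localizing cutoffs can be inserted on either side. This is where the $C^2$-smoothness of the arcs is essential — it ensures $\varphi\in C^1$ up to the corner with $\varphi'\neq 0$, so that the Jacobian factors are continuous and bounded away from zero near the vertex — and where one leans on the precise statements in \cite{PP14} about how $T_\Omega$, equivalently $K$ in the energy norm, behaves under such conformal changes of the domain.
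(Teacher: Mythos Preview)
Your strategy---straighten the corner conformally and invoke the wedge computation via Lemma~\ref{lem:wedgelocal}---is exactly the paper's, and you correctly identify that the conformal-perturbation results of \cite{PP14} are the hinge. The difference, and the gap in your write-up, is in how the conformal map is deployed. You work with a \emph{local} map $\varphi$ defined only on a neighbourhood of the corner, and then hope to transport $M_\rho K_\Omega M_\rho$ to $M_{\tilde\rho}K_{V_{\alpha_j}}M_{\tilde\rho}$ by pushing functions through $\varphi$. But the lemmas you cite from \cite{PP14} (specifically Lemmas~4.3 and~4.4 there) concern a \emph{global} Riemann map between two bounded domains and conclude equality of the \emph{full} essential spectra of the two NP operators; they do not, as stated, furnish a bounded-invertible transport between $H^{1/2}(\partial\Omega)$ and $H^{1/2}(\partial V_{\alpha_j})$ that intertwines the localized operators up to a compact error. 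You flag this yourself as ``the main obstacle'', and you do not close it.

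The paper sidesteps the obstacle by reorganizing the argument so that only a global Riemann map is ever needed. First it observes that $M_\rho K M_\rho$ depends only on $\partial\Omega\cap\supp\rho$, so one may assume without loss that $\Omega$ has a \emph{single} corner $a$ of angle $\alpha$. It then introduces an auxiliary single-corner domain $U_\alpha$ that coincides with $V_\alpha$ near its corner; for $U_\alpha$ the wedge lemma and Lemma~\ref{lem:omegalocal} give $\sigma_{\ess}(K_{U_\alpha})=\{|x|\le|1-\alpha/\pi|\}$ directly. Now both $\Omega$ and $U_\alpha$ are simply connected with one corner of the same aperture, so a global Riemann map $\varphi:\Omega\to U_\alpha$ sending $a$ to the corner of $U_\alpha$ is $C^{1,b}$ up to the boundary by \cite[Lemma~4.3]{PP14}, and \cite[Lemma~4.4]{PP14} applies verbatim to yield $\sigma_{\ess}(K_\Omega)=\sigma_{\ess}(K_{U_\alpha})$. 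A final appeal to Lemma~\ref{lem:omegalocal} (single corner again) converts this back into a statement about $\sigma_{\ea}(M_\rho K_\Omega M_\rho)$. The moral: rather than localizing first and then conformally comparing, the paper conformally compares at the level of whole (single-corner) domains and localizes at the two ends---which is exactly what the available tools from \cite{PP14} support.
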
 
\begin{proof}
Due to the local nature of the operator $M_{\rho}KM_{\rho}$ we may clearly assume, without loss of generality, that $\Omega$ only has a single corner $a$, of angle $\alpha$.  Similarly, let $U_{\alpha}$ be a smooth domain with only one corner. We suppose that this corner is at $-1$, and that $U_\alpha$ is identical to $V_\alpha$ in a neighborhood of $-1$. Lemma \ref{lem:wedgelocal} then produces a function $\chi$ on $\partial U_\alpha$ such that
\[\sigma_{\ea}(M_{\chi}K_{U_\alpha}M_{\chi}) = \sigma_{\ess}(K_{V_{\alpha}}) = \left\{x\in \mathbb{R} \, : \, |x| \leq \left|1-\frac{\alpha}{\pi}\right| \right\}.\]
On the other hand, the difference $K_{U_\alpha} - M_{\chi}K_{U_\alpha}M_{\chi}$ is compact in this case, since there is only one corner. Hence,
\[\sigma_{\ess}(K_{U_\alpha}) = \sigma_{\ea}(K_{U_\alpha}) = \left\{x\in \mathbb{R} \, : \, |x| \leq \left|1-\frac{\alpha}{\pi}\right| \right\}.\]
Let $\varphi : \Omega \to U_\alpha$ be a Riemann map such that $\varphi(a) = -1$. Lemma 4.3 of \cite{PP14} then shows that $\varphi$ is $C^{1,b}$-smooth in $\overline{\Omega}$, for $0 < b < 1$, and \cite[Lemma 4.4]{PP14} then says that $\sigma_{\ess}(K) = \sigma_{\ess}(K_{U_\alpha})$. The proof is finished by noting that $\sigma_{\ess}(K) = \sigma_{\ea}(M_\rho K M_\rho)$ by Lemma \ref{lem:omegalocal} applied to $\Omega$.
\end{proof}
\begin{proof}[Proof of Theorem \ref{thm:specomega}] The statement follows immediately from Lemmas \ref{lem:omegalocal}, \ref{lem:wedgelocal} and \ref{lem:onecorner}.
\end{proof}

\subsection{Final Remarks} Since $K : H^{1/2}(\partial \Omega)/\mathbb{C} \to H^{1/2}(\partial \Omega)/\mathbb{C}$ is similar to a self-adjoint operator, it is completely described by a spectral resolution. In other words, it is a Dunford scalar operator \cite{DS71}. In the absence of a multiplicity theory (and hence classification) of Dunford scalar operators modulo compact operators, we gather a few observations which might lead to a better framework to explain the phenomena unveiled by the computations specific to the NP operator. We keep the notation of the previous sections, but keep in a mind a more general situation. 

Let $K$ denote the Neumann--Poincar\'e operator acting in the complex Hilbert space $H = H^{1/2}(\partial \Omega)/\mathbb{C}$ and let $K_j$ denote its localizations (in our case $K_j = M_{\rho_j} K M_{\rho_j}$). We can assume that the supports $F_j$ of the cut-off functions $\rho_j$ are separated, so that the operator $K_j$ acts
on the closed subspace $H_j$ of elements of $H$ having support contained in $F_j$, for every $j$. Let $P_j$ denote the orthogonal projection of $H$ onto $H_j$.

The subspaces $H_j$ are not mutually orthogonal due to the non-locality of $H$, but their operator angles are almost perpendicular in the sense that $P_j P_k$ is compact for every $ j \neq k$. Denote by $\widetilde{T}$ the class of an operator
in the Calkin algebra ${\mathcal L}(H)/{\mathcal K}(H)$. Thus $\widetilde{P_j}$ are mutually orthogonal projections and
$$ \widetilde{K_j} = \widetilde{P_j}\widetilde{K}\widetilde{P_j}.$$
Moreover,
$$ \widetilde{K} = \widetilde{K_1} + \widetilde{K_2} + \ldots \widetilde{K_N}.$$
From here we infer that for every polynomial $q \in \mathbb{C} [z]$ we have
$$  q(\widetilde{K_j}) = \widetilde{P_j}q(\widetilde{K})\widetilde{P_j}.$$

As $K$ itself is a Dunford scalar operator with real spectrum, contained in the compact set $\sigma \subset \mathbb{R}$, we infer
$$ \| q(\widetilde{K_j}) \| \leq \| q(\widetilde{K}) \| \leq C \| q \|_{\infty, \sigma},$$
where $C$ is a constant. By the Stone-Weierstrass theorem, we find that every component $\widetilde{K_j}$ admits a continuous functional calculus with
continuous functions on $\sigma$. In this sense, every $\widetilde{K_j}$ is a scalar operator in the Calkin algebra, with real spectrum. Their direct orthogonal sum
is the class $\widetilde{K}$ of the Neumann--Poincar\'e operator, and in this manner the essential spectra of the components $\widetilde{K_j}$ stack on top of each other.

\section*{Acknowledgments}
The first author is grateful to Alexandru Aleman for discussions that proved essential to writing this article. The second author is grateful to Hyeonbae Kang
and Habib Ammari for their unconditional interest in the
 spectral analysis of the NP operator.

\bibliographystyle{amsplain} 
\bibliography{NPSpectrum} 
\end{document}